\newcommand{\alertm}[1]{%
  \marginpar{%
    \ifodd\value{page} \raggedright \else \raggedleft \fi
    \footnotesize{\textcolor{Green}{#1}}
  }
}
\newtheorem{thm}{Theorem}
\newtheorem{prop}[thm]{Proposition}
\theoremstyle{definition}
\newtheorem{defn}{Definition}
\theoremstyle{remark}
\newtheorem{remark}{Remark}
\newtheorem{assumption}{Assumption}
\numberwithin{equation}{section}
\def\R{\mathbb{R}}
\def\Z{\mathbb{Z}}
\def\F{\mathcal{F}}
\def\a{\mathfrak{a}}
\def\m{\mathfrak{m}}
\def\I{\mathrm{I}_{\s}}
\def\Q{\mathcal{Q}}
\def\RW{\mathcal{M}}
\def\RWC{\widetilde{\RW}}
\def\S{\mathbb{S}^3}
\def\J{\mathrm{J}_{\s}}
\def\s{\mathfrak{s}}
\def\p{\mathfrak{p}}
\def\m{\mathfrak{m}}
\def\cm{\widetilde{\m}}
\def\cp{\widetilde{\p}}
\def\a{\mathfrak{a}}
\def\H{\mathcal{H}}
\def\Lr{\ell_{\s}}
\def\LrC{\ell_{\S}}
\def\LrE{\ell_{1}}
\def\F{\phi_{\s}}
\def\E{\mathcal{E}}
\def\SS{\mathbb{S}^3}
\def\S{\mathrm{S}}
\def\Q{\mathrm{Q}}
\begin{document}

\title[]{Critical Robertson--Walker universes}

\author{Olimjon Eshkobilov}
\address{(O. Eshkobilov) Dipartimento di Matematica ``Giuseppe Peano'', Universit\`a di Torino,
Via Carlo Alberto 10, I-10123 Torino, Italy}
\email{olimjon.eshkobilov@edu.unito.it, olimjon.eshkobilov@polito.it
}

\author{Emilio Musso}
\address{(E. Musso) Dipartimento di Scienze Matematiche, Politecnico di Torino,
Corso Duca degli Abruz\-zi 24, I-10129 Torino, Italy}
\email{emilio.musso@polito.it}

\author{Lorenzo Nicolodi}
\address{(L. Nicolodi) Di\-par\-ti\-men\-to di Scienze Ma\-te\-ma\-ti\-che, Fisiche e Informatiche,
Uni\-ver\-si\-t\`a di Parma, Parco Area delle Scienze 53/A,
I-43124 Parma, Italy}
\email{lorenzo.nicolodi@unipr.it}

\thanks{Authors partially supported by
PRIN 2015-2018 ``Variet\`a reali e complesse: geometria, to\-po\-lo\-gia e analisi ar\-mo\-ni\-ca'';
by the GNSAGA of INDAM; and by the FFABR Grant 2017 of MIUR.
The present research was also partially supported by MIUR grant
``Dipartimenti di Eccellenza'' 2018–2022, CUP: E11G18000350001, DISMA, Politecnico
di Torino.}

\subjclass[2010]{53C50, 53B30, 53Z05, 83C15, 33E05, 58E30}



\keywords{Lorentz metrics; Robertson--Walker spacetimes; perfect fluid;
conformal time; cyclic models in general relativity; weak, dominant, and strong energy conditions;
elliptic functions and integrals; Weierstrass $\wp$-functions}

\begin{abstract}

The integral of the energy density function $\m$ of
a closed Robertson--Walker (RW) spacetime
with source a perfect fluid
and cosmological
constant $\Lambda$
gives rise to an action functional on the space of scale functions of RW spacetime metrics.
%
%
This paper studies closed RW spacetimes which are critical for this functional,
subject to
volume-preserving variations ($\m$-critical RW spacetimes).
A complete classification of $\m$-critical RW spacetimes is given
and explicit solutions in terms of Weierstrass elliptic functions and their degenerate forms are computed.
The standard energy conditions
(weak, dominant, and strong)
as well as the cyclic property
of $\m$-critical RW spacetimes
are discussed.
\end{abstract}

\maketitle

\section{Introduction}

Robertson--Walker (RW) spacetimes
with source a perfect fluid
are the most basic cosmological models in general relativity.
Despite their old 
history \cite{Fri,Le,Rb,Wa},
RW cosmological models still provide a valuable testing ground for
new ideas and theories. These include
gravitational thermodynamics \cite{Ca,TB,ZRL}, relativistic diffusion \cite{AN},
cyclic and conformal cyclic cosmology \cite{BC,Pn1,Pn3,Tod,Todcourse},
Lorentz conformal geometry \cite{AL,BCDG,Fr}, super-symmetry and string theory \cite{BMMO,RO}.

\vskip0.1cm

In this work we will investigate a natural variational problem for closed RW
spacetimes filled with a perfect fluid and satisfying the Einstein equation with a cosmological constant.
More precisely, let $\mathcal M(\s) = ( \mathrm I_\s \times\mathbb  S^3, \ell_\s = -dt^2 + \s^2(t) g)$ be
a closed RW spacetime,
where $\s$, the scale function, is a nonnegative smooth function on an open
interval $\mathrm I_\s \subset \R$ and $g$ is the standard metric
of the 3-sphere $\mathbb S^3$.
A closed RW cosmological model (universe) is a closed RW spacetime $\mathcal M(\s)$
whose RW metric $\ell_\s$ satisfies the Einstein equation with cosmological constant $\Lambda$
and source given by
the stress-energy tensor of a perfect fluid, $\mathrm T =\m dt^2+\p \s^2 g$,
where the energy density $\m$ and the pressure $\p$ are scalar functions of $t$ only.
%
%
%
Accordingly,
the integral
\begin{equation}\label{action}
   \mathfrak{M}[\s;\Lambda]=\int_{\mathcal{D}}\m \,dV_{\ell_\s}
     \end{equation}
defines an action functional on the space of scale functions of RW metrics.
Here $dV_{\ell_\s}$ stands for the volume element of the RW metric $\ell_\s$ and
$\mathcal{D} = [t_0,t_1]\times \mathbb S^3$
is a compact domain of $\mathrm I_\s \times\mathbb  S^3$.
The critical points of \eqref{action} with respect to volume-preserving variations of the
RW metric
are called {\em $\m$-critical RW universes} (or {\em critical models}, for short).
%
%
After deriving
the variational equation,
which is a 2nd order ODE in
the scale function $\s$, and computing the equation of state fulfilled by
the energy density $\m$ and the pressure $\p$,
%
%
we rewrite the variational equation in terms of the conformal time
$\tau$, defined by $d\tau = dt/\mathfrak s$.
The advantage of this coordinate transformation is that
the resulting {\em conformal scale function} can be described as a real form
of a Weierstrass $\wp$-function.
According to whether the discriminant of the cubic polynomial of such an associated Weierstrass
$\wp$-function is different or equal to zero, a critical model is said to be of {\it general}
or {\it exceptional} type.
%
%
The critical models of exceptional type can be easily described in terms of trigonometric or hyperbolic functions
(cf. Section \ref{s:1}).
The main purpose of this paper is to study the critical models of general type.
We will describe their structure and discuss their energy conditions and periodicity properties.

\vskip0.1cm
Special and elliptic functions have been already used in the literature to describe particular classes
of RW universes \cite{BC,BYYM,DA,DW}. However, this was done by imposing the equation of state a priori,
while our models are obtained from a rather natural variational principle. Weierstrass elliptic functions
are also used
in the study of the restricted evolution problem
within
the more general class of
Lema\^{\i}tre--Tolman--Bondi spacetimes \cite{BCL}. In addition, the theory of elliptic functions plays a relevant
role in the study of null geodesics in Schwarzschild spacetime \cite{GV},
closed conformal geodesics in Euclidean space
\cite{MNcag1},
higher-order variational problems
for null curves in Lorentz space forms \cite{FGL,GM,KP,MNcqg,MNsicon,MNnonlinearity,NFS2,NMMK,NR},
conformally invariant
variational problems for timelike curves in the Einstein static universe \cite{DMNna},
and other geometric variational problems \cite{JMN-JPA,MMN,MNjmp,MNforum}.

\vskip0.1cm
The paper is organized as follows.
Section \ref{s:1} collects some basic facts about
closed RW cosmological models \cite{HE,ON,Wald,We}
and derives the variational equation and the equation of state satisfied by the
scale function of a critical model (cf. Proposition \ref{Proposition1}). As a byproduct,
it follows that the critical scale functions depend
on two internal parameters, denoted by $\a$ and $\H$.
Proposition \ref{Proposition2} writes the
conformal scale function, the conformal energy density, and the conformal pressure,\footnote{i.e., the
scale function,
the energy density, and the pressure written in terms of the conformal time.} in terms of real forms
of Weierstrass $\wp$-functions.
A critical model is said to be of {\it general}
or {\it exceptional} type, according to whether the discriminant of the cubic polynomial of the associated
Weierstrass $\wp$-function is different or equal to zero.
The general critical models are further divided in two classes, namely Class I and Class II,
depending on the positivity or negativity of the discriminant.
The section terminates with the discussion and
the explicit description of the exceptional critical models.

\vskip0.1cm
Section \ref{s:2} studies the critical models of Class I. Theorem \ref{Teorema1} explicitly describes
the critical models of Class I, with $\H<0$, and determines the bounds
satisfied by
the cosmological constant
in order that the model fulfils the weak, dominant, or strong energy conditions.
It is also shown that these solutions are cyclic and nondegenerate. Theorem \ref{Teorema2} provides similar
results for the critical models of Class I, with $\H>0$.
In the latter case,
the scale function vanishes along the disjoint union of a countable family of totally umbilical, connected,
spacelike hypersurfaces.

\vskip0.1cm
Section \ref{s:3} is devoted to the analysis of critical models of Class II. We distinguish
two possible types: negative or positive. Theorem \ref{Teorema3} deals with the critical models
of Class II and negative type. We analyze the bounds on the cosmological constant so that the
weak, the dominant, or the strong energy conditions are satisfied. We also show that these models are
cyclic and degenerate along countably many totally umbilical, connected, spacelike hypersurfaces.
In Theorem \ref{Teorema4}, we consider critical models of Class II and positive type, with $\H>0$.
These models are not cyclic and their Lorentz quadratic forms are degenerate along a totally umbilical,
connected,
spacelike hypersurface. The conformal scale function tends to infinity in finite time.
Finally, in Theorem \ref{Teorema5} we prove similar results for critical models of Class II and
positive type, but with $\H<0$.

\vskip0.1cm
Summarizing, the only genuine cyclic $\m$-critical models are those corresponding to
Class I, with $\H<0$. In all other cases, the scale function either vanishes along a totally
umbilical spacelike hypersurface, or tends to $+\infty$ in finite time.
\vskip0.1cm

As a basic reference for Weierstrass elliptic functions we use \cite{La} .
We acknowledge the use of the software {\sc Mathematica} for
symbolic and numerical computations throughout the paper.

\section{Preliminaries}\label{s:1}

In this section, after recalling the basic definitions, we derive the variational equation for the
scale function of an $\m$-critical RW universe and write the conformal scale function in terms of
a real form of a Weierstrass $\wp$-function.
We find the equation of state and classify the
$\m$-critical RW universes into three classes.

\subsection{Closed RW universes and their conformal models}

Let $g$ be the standard Riemannian metric of the unit 3-sphere $\SS\subset \R^4$. Consider a closed
RW {\it spacetime} $\RW:=\I\times_{\s} \SS$  with a {\it nonnegative scale function} $\s:\I\subset \R\to \R^+$, endowed with the (possibly degenerate) Lorentz metric $\Lr=-dt^2+\s^2(t)\,g$, the {\it cosmological constant} $\Lambda$ and a {\it perfect fluid stress-energy tensor}
\[
  \mathrm{T}=\m dt^2+\p \s^2 g,
  \]
where $\m :\I\to \R^+\cup \{+\infty\}$ is the {\it energy density function} and $\p:\I\to \R\cup \{\pm \infty\}$
is the {\it pressure function}.
%
%
Let $\rho_{\s}$ and $\sigma_{\s}$ be, respectively, the Ricci tensor and the scalar curvature of $\Lr$. Then the Einstein field equation\footnote{We use units in which the gravitational constant $G=1$ and the speed of light $c=1$.}
\[
  \rho_{\s}+\left(\Lambda-\frac{1}{2}\sigma_{\s}\right)\Lr=8\pi \mathrm{T}
   \]
is equivalent to
\begin{equation}\label{EF}
\begin{cases}
 \m = \frac{3}{8\pi}\left(\frac{\dot{\s}^2}{\s^2}+\frac{1}{\s^2}-\frac{\Lambda}{3}\right),\\
  \p = -\frac{1}{4\pi \s}\left(\ddot{\s}+\frac{1}{2\s}(\dot{\s}^2+1)-\frac{\Lambda}{2}\s\right),
   \end{cases}
    \end{equation}
where dot denotes differentiation with respect to the cosmological time $t$.
We implicitly assume that $\I$ is the maximal domain of definition of the scale function,
i.e., $\s$ cannot be extended to a nonnegative differentiable function whose interval
of definition contains properly $\I$.

\begin{remark}
The RW spacetime with $\I=\R$, $\s=1$, $\m+\p=(4\pi)^{-1}$, and $\Lambda = 1+8\pi\p$
is the {\it Einstein static universe} (cf. \cite{Einstein}), which will be denoted by $(\E,\LrE)$.
\end{remark}

\begin{defn}
There are three standard physical conditions that are usually imposed on a RW spacetime (cf. \cite{HE}):
\begin{itemize}
\item the {\it weak energy condition} : $-\m\le \p$;
\item the {\it dominant energy condition} : $-\m\le \p\le \m$;
\item the {\it strong energy condition} : $-\m/3\le \p\le \m$.
\end{itemize}
\end{defn}

\begin{assumption}
If, on the one hand, the scale function $\s$ is allowed to vanish at some $t\in \mathrm I_\s$,
we will assume that the {\it conformal time}
\begin{equation}\label{CT}
%
\tau_{\mathfrak s} :  \I  \to \mathbb R,\, t \mapsto \tau_{\mathfrak s}(t) :=\int_{t_0}^t \frac{du}{\s(u)}
\end{equation}
is a continuous, strictly increasing function.
\end{assumption}

 Under the previous assumption, $\J :=\tau_{\s}(\I)$ is an open interval
and $\tau_{\s}^{-1}:\J\to \R$ is a strictly increasing differentiable function. Moreover, the map
\begin{equation}\label{ce}
 \F :  \J\times \SS \ni (\tau,p) \mapsto (\tau_{\s}^{-1}(\tau),p)\in \RW.
  \end{equation}
is a differentiable homeomorphism, such that
\[
 \F^*(\Lr) = \S^2(-dt^2+g),
  \]
where $\S:=\s\circ \tau_{\s}^{-1}:\J\to \R$ is a nonnegative differentiable function, called
the {\it conformal scale function}.
Note that $\J$ is the maximal domain of definition of $\S$. Therefore, $\F$ is a
weakly conformal homeomorphism of the open domain $\J\times \mathbb S^3$ of the Einstein static universe onto $\RW$.
The map $\F$ fails to be of maximal rank at the points $(\tau,p)\in \J\times \SS$, such that $\S(\tau)=0$.
The inverse map $\F^{-1}$ is a homeomorphism which fails to be differentiable at
the points $(t,p)\in \I\times \SS$, such that $\s(t)=0$.

\begin{defn}\label{def:cyclic}
Let $\LrC$ denote the quadratic form $\F^*(\Lr)$ and put $\widetilde{\RW}=\J\times \mathbb S^3$. We say that
$(\widetilde{\RW}, \LrC)$ is the {\it conformal model} of the RW spacetime with scale function $\s$.
If $\J=\R$ and $\S$ is periodic, we say that the conformal model $(\widetilde{\RW}, \LrC)$ is {\it cyclic}.
The functions
$\cm:=\m\circ \tau_{\mathfrak s}^{-1}:\J\to \R\cup\{+\infty\}$ and
$\cp:=\p\circ \tau_{\mathfrak s}^{-1}:\J\to \R\cup\{\pm\infty\}$
are called the {\it conformal energy density} and the {\it conformal pressure}, respectively.
\end{defn}

In view of \eqref{EF} and the fact that,  by \eqref{CT}, $d\tau = dt/\mathfrak s$,
the conformal energy density and the conformal pressure can be written as
\begin{equation}\label{cmcp} 
\begin{cases}
\widetilde{\m}=\frac{3}{8\pi}\left(\frac{\S'^2}{\S^4}+\frac{1}{\S^2}-\frac{\Lambda}{3}\right),\\
{\cp =-\frac{1}{4\pi \S}\left(\frac{\S''}{\S^2}-\frac{\S'^2}{2\S^3}+\frac{1}{2\S}-\frac{\Lambda \S}{2} \right)}.
\end{cases}
\end{equation}
Here prime denotes differentiation with respect to the conformal time $\tau$.


\subsection{$\m$-critical RW universes and their state equations}

The total energy of the compact domain $\mathcal{D}=[t_0,t_1]\times \mathbb S^3\subset \RW$ is
\[
  \mathfrak{M}=\int_{\mathcal{D}}\m \,dV_{\Lr},
    \]
where $dV_{\Lr}$ is the volume element of $\Lr$.

\begin{defn}
A RW spacetime is said to be {\it $\m$-critical} if its scale function is a critical point of the functional
$\mathfrak{M}$ with respect to volume-preserving variations of the RW quadratic form $\Lr$.
\end{defn}

\begin{prop}\label{Proposition1}
A RW spacetime is $\m$-critical if and only if its scale function $\s$ satisfies
\begin{equation}\label{VE2}
 \s\left(\dot{\s}^2+\frac{\mathfrak{a}}{3}\s^2-1\right)=\H,
  \end{equation}
where $\mathfrak{a}$ and $\H$ are two arbitrary constants, called the {\em internal parameters}.
In addition, the state equation of an $\m$-critical spacetime with parameters $\mathfrak{a}$ and $\H$ is
\begin{equation}\label{EQS}
 \frac{8\pi}{3}\m+8\pi \p -\frac{2}{3}(\Lambda+\mathfrak{a})-\frac{\H}{2\sqrt{2}}(\Lambda+\mathfrak{a}-8\pi \p)^{3/2}=0.
  \end{equation}
   \end{prop}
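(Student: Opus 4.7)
\emph{Reduction to a one-dimensional variational problem.} The volume element of $\Lr=-dt^{2}+\s^{2}g$ factorizes as $dV_{\Lr}=\s^{3}(t)\,dt\,dV_{g}$. Substituting the expression of $\m$ coming from the first equation of \eqref{EF} and integrating over $\SS$, the action \eqref{action} takes the form
\begin{equation*}
\mathfrak{M}[\s;\Lambda]=\frac{3\,\mathrm{vol}(\SS)}{8\pi}\int_{t_{0}}^{t_{1}}\!\left(\s\dot\s^{2}+\s-\tfrac{\Lambda}{3}\s^{3}\right)dt,
\end{equation*}
so the problem reduces to the one-dimensional Lagrangian $L(\s,\dot\s)=\s\dot\s^{2}+\s-\tfrac{\Lambda}{3}\s^{3}$. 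Since the four-volume of $\D$ is proportional to $\int_{t_{0}}^{t_{1}}\s^{3}dt$, volume-preserving variations are those $\delta\s$ with compact support in $(t_{0},t_{1})$ satisfying $\int \s^{2}\delta\s\,dt=0$. By the Lagrange-multiplier principle, $\s$ is $\m$-critical if and only if it is an unconstrained critical point of the modified Lagrangian $\widetilde L:=L+\lambda\s^{3}$ for some $\lambda\in\R$.

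\emph{First integral.} Since $\widetilde L$ is autonomous, its energy $E=\dot\s\,\partial_{\dot\s}\widetilde L-\widetilde L$ is conserved along critical curves. A direct computation gives $\partial_{\dot\s}\widetilde L=2\s\dot\s$, hence
\begin{equation*}
E=\s\dot\s^{2}-\s+\left(\tfrac{\Lambda}{3}-\lambda\right)\s^{3}=\s\!\left(\dot\s^{2}+\tfrac{\a}{3}\s^{2}-1\right),
\end{equation*}
upon setting $\a:=\Lambda-3\lambda$. Letting $\H:=E$ turns $E=\mathrm{const.}$ into \eqref{VE2}. Conversely, differentiating \eqref{VE2} once in $t$ and dividing by $\dot\s$ recovers the full Euler--Lagrange equation for $\widetilde L$, so the two conditions are equivalent. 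This identifies the internal parameters $\a$ (the repackaged Lagrange multiplier) and $\H$ (the energy constant).

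\emph{Equation of state.} Reading \eqref{VE2} as $\dot\s^{2}=1-\tfrac{\a}{3}\s^{2}+\H/\s$ and differentiating once yields $\ddot\s=-\tfrac{\a}{3}\s-\tfrac{\H}{2\s^{2}}$. Substituting both into the pressure formula in \eqref{EF} triggers a complete cancellation of the $\H$-terms, leaving $8\pi\p=(\Lambda+\a)-2/\s^{2}$; equivalently
\begin{equation*}
\s^{-2}=\tfrac{1}{2}(\Lambda+\a-8\pi\p),\qquad \s^{-3}=\tfrac{1}{2\sqrt{2}}(\Lambda+\a-8\pi\p)^{3/2}.
\end{equation*}
Plugging these back into the first equation of \eqref{EF} (rewritten as $\tfrac{8\pi}{3}\m=2\s^{-2}-\tfrac{\a+\Lambda}{3}+\H\s^{-3}$) and simplifying produces exactly \eqref{EQS}. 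The only delicate aspect of the whole argument is the precise bookkeeping of the Lagrange multiplier and the identification of the cancellation of the $\H$-terms in the pressure expression; once these are handled, everything else reduces to a single clean algebraic pass.
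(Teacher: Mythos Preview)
Your proof is correct and follows essentially the same route as the paper: reduce the functional to a one-dimensional autonomous Lagrangian, introduce a Lagrange multiplier for the volume constraint, exploit time-translation invariance to obtain the first integral \eqref{VE2}, and then substitute $\dot{\s}^{2}$ and $\ddot{\s}$ from \eqref{VE2} into \eqref{EF} to reach the state equation. The only cosmetic difference is your normalization of the multiplier ($\a=\Lambda-3\lambda$ versus the paper's $\a=\Lambda+\lambda$), which is immaterial since $\lambda$ is arbitrary.
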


\begin{proof}
The volume element of $\Lr$ is $\s^3dt\wedge d\upsilon$, where $d\upsilon$ is the volume element of the unit
3-sphere $\SS$. We then have
\[
   \mathfrak{M}= \frac{3\pi}{4}\int_{t_0}^{t_1}\s(\dot{\s}^2-\frac{\Lambda}{3}\s^2+1)dt.
     \]
The constraint on the volume amounts to requiring that
\begin{equation}\label{con}
   \int_{t_0}^{t_1}\s^3 dt = \mathrm{const}.
     \end{equation}
Thus, a RW spacetime is $\m$-critical if and only if its scale function $\mathfrak s$ is
critical for the action integral $\int_{t_0}^{t_1}L(\s, \dot{\s})dt$ corresponding to the Lagrangian
\[
  L(\s,\dot{\s})=\s(\dot{\s}^2-\frac{\Lambda+\lambda}{3}\s^2+1),
    \]
where $\lambda\in \R$ is the Lagrange multiplier associated to the constraint \eqref{con} (see \cite{BS}),
Equivalently,
$\s$ is a solution of the Euler--Lagrange equation of the action,
\begin{equation}\label{VE}
 \frac{d}{d t}\frac{\partial L}{\partial \dot{\s}}-\frac{\partial L}{\partial \s} = 2\s\ddot{\s}+\dot{\s}^2+\mathfrak{a}\s^2-1=0,
   \end{equation}
where $\mathfrak{a}:=\Lambda+\lambda$.
Now, since $L$ does not depend explicitly on time, taking the total time derivative of $L$
and replacing $\frac{\partial L}{\partial \s}$ by $\frac{d}{d t}\frac{\partial L}{\partial \dot{\s}}$,
in accordance with the Euler--Lagrange equation, yields
\[
\frac{d L}{d t} = \frac{\partial L}{\partial {\s}} \dot{\s} + \frac{\partial L}{\partial \dot{\s}}\ddot{\s}=
\frac{d}{d t}\left( \dot{\s} \frac{\partial L}{\partial \dot{\s}} \right).
\]
This implies that
\[
 \dot{\s} \frac{\partial L}{\partial \dot{\s}} -L = \s(\dot{\s}^2+\frac{\mathfrak{a}}{3}\s^2-1)
  \]
is a first integral of the motion.
%
%
Therefore, $\s$ satisfies \eqref{VE} if and only if there exists a constant $\H$ such that
\begin{equation}\label{VE21}
 \s(\dot{\s}^2+\frac{\mathfrak{a}}{3}\s^2-1)=\H.
   \end{equation}
In view of \eqref{VE}, the first equation of \eqref{EF} becomes
\begin{equation}\label{VE22}
   \s^2=\frac{2}{\Lambda+\mathfrak{a}-8\pi \p}.
     \end{equation}
Thus $8\pi \p\le (\Lambda+\mathfrak{a})$. Using \eqref{EF}, \eqref{VE2} and \eqref{VE22} it is now an
easy matter to check
that the equation of state of an $\m$-critical RW spacetime, with internal parameters $\H$ and $\mathfrak{a}$,
and cosmological constant $\Lambda$, is
\begin{equation}\label{EQS1}
 \frac{8\pi}{3}\m+8\pi \p -\frac{2}{3}(\Lambda+\mathfrak{a})-\frac{\H}{2\sqrt{2}}(\Lambda+\mathfrak{a}-8\pi \p)^{3/2}=0.
  \end{equation}
This concludes the proof.
\end{proof}

\begin{remark}
If $\H=0$, then \eqref{VE2} can be easily integrated in terms of elementary functions. As a result we find
\begin{equation}
\begin{split}
\s(t)&=\vert\sqrt{{3}/{\a}}\,\sin\big(\sqrt{{\a}/{3}}\,t+c\big)\vert, \quad \a>0,\\
\s(t)&=\vert\sqrt{{3}/{\a}}\,\sinh\big(\sqrt{{\a}/{3}}\,t+c\big)\vert,\quad \a<0,\\
\s(t)&=\vert t+c\vert,\quad  \a=0.
\end{split}
\end{equation}
%
From now on, we will assume that $\H$ is different from zero and $\s$ is nonconstant.
\end{remark}

\subsection{The conformal scale function of a critical RW spacetime}

\begin{defn}
A {\it real form} of the Weierstrass elliptic function
$\wp (z;g_2,g_3)$
with \textit{real} invariants $g_2$ and $g_3$ is a nonconstant function
$\Q:\R\to \R\cup\{+\infty\}$, such that
\begin{equation}\label{wp}
  \Q'^2=4\Q^3-g_2Q-g_3.
    \end{equation}
\end{defn}

\begin{remark}\label{realforms}
The behavior of these functions depends on the discriminant $\Delta = 16(g_2^3-27g_3^2)$ of the cubic polynomial $P(x)=4x^3-g_2x-g_3$.

\vskip0.2cm

 If $\Delta=0$ (degenerate case), then
\begin{itemize}
\item if $g_3=-8a^3>0$, then $\Q(\tau)=-3a\tan(\sqrt{-3a}\tau+c)^2-2a$, $c\in \R$;
\item if $g_3=-8a^3>0$, then either
\begin{enumerate}
\item $\Q(\tau)=3a\tanh(\sqrt{3a}\tau+c)^2-2a$, $c\in \R$, or
\item $\Q(\tau)=a(1+3\mathrm{csch}(\sqrt{3a}\tau+c)^2)$, $c\in \R$;
\end{enumerate}
\item if $g_3=g_2=0$, then $\Q(\tau)=(\tau+c)^2$, $c\in \R$.
\end{itemize}

\vskip0.2cm

If $\Delta \neq 0$, the Weierstrass $\wp$-function is doubly periodic, with primitive half-periods
$\omega_1$, $\omega_3$, such that $\omega_1\in \R$, $\omega_1>0$
and $\mathrm{Im}(\omega_3/\omega_1)>0$.
If $\Delta>0$, $\omega_3$ is purely imaginary, while, if $\Delta<0$, $\mathrm{Re}(\omega_3)\neq 0$.
Its real forms are:
\begin{itemize}
\item if $\Delta>0$, then either
\begin{enumerate}
\item $\Q(\tau)=\wp(\tau+2m\omega_1+2n\omega_3+c,g_2,g_3)$, $c\in\R$, $m,n\in \Z$, or
\item $\Q(\tau)=\wp(\tau + \omega_3+2m\omega_1+2n\omega_3+c,g_2,g_3)$, $c\in\R$, $m,n\in \Z$;
\end{enumerate}
\item if $\Delta<0$, then $\Q(\tau)=\wp(\tau+2m\omega_1+2n\omega_3+c,g_2,g_3)$, $c\in \R$, $m,n\in \Z$.
\end{itemize}
The constants $m,n$ and $c$ are irrelevant and can be put equal to $0$.
\end{remark}

We can prove the following.

\begin{prop}\label{Proposition2}
The conformal scale function of an $\m$-critical RW universe with internal parameters $\H$ and $\mathfrak{a}$, and cosmological constant $\Lambda$, is given by
\begin{equation}\label{Scale}
  \S(\tau)=\frac{3\H}{12\Q(\tau)-1},
   \end{equation}
where $\Q$ is a real form of the Weierstrass $\wp$-function with invariants
\begin{equation}\label{inv}
  g_2=\frac{1}{12}, \quad g_3=\frac{1}{48}\Big(\a\H^2-\frac{2}{9}\Big).
   \end{equation}
The conformal energy density and the conformal pressure are given by
\begin{equation}\label{cmp}
\begin{split}
\widetilde{\m} &={\frac{1}{8\pi}\left(\frac{(1-12\Q)^2(5+12\Q)}{9\H^2}-\mathfrak{a}-\Lambda \right)},
  \\
\widetilde{\p} &=\frac{1}{4\pi} \left(\frac{\mathfrak{a}+\Lambda}{2}-\frac{(12\Q-1)^2}{9\H^2}\right).
\end{split}
   \end{equation}
\end{prop}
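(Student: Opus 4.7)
The plan is to convert the variational equation \eqref{VE2} into conformal time, reduce the resulting first-order ODE for $\S$ to Weierstrass normal form via an explicit M\"obius substitution, and then read off the conformal energy density and pressure from \eqref{cmcp}.

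Using $d\tau = dt/\mathfrak s$ and $\mathfrak s(t) = \S(\tau(t))$, the chain rule yields $\dot{\mathfrak s} = \S'/\S$. Substituting this into \eqref{VE2} and clearing a factor of $\S$ produces the first-order autonomous ODE
\[
  (\S')^2 = \S^2 + \H\,\S - \tfrac{\mathfrak a}{3}\,\S^4,
\]
whose right-hand side is a quartic polynomial in $\S$ with no cubic term. I would next introduce the M\"obius substitution $\S = 3\H/(12\,\Q - 1)$, whose specific coefficients are chosen so that the transformed ODE for $\Q$ takes the Weierstrass normal form \eqref{wp}. Differentiating gives $\S' = -36\,\H\,\Q'/(12\,\Q - 1)^2$; substituting the resulting expressions for $(\S')^2$ and the powers $\S^k = (3\H)^k/(12\,\Q - 1)^k$ into the quartic ODE, then multiplying through by $(12\,\Q - 1)^4/(3\,\H^2)$ and expanding $(12\,\Q - 1)^3$ and $(12\,\Q - 1)^2$, collapses the equation to
\[
  (\Q')^2 = 4\,\Q^3 - \tfrac{1}{12}\,\Q - \tfrac{1}{48}\bigl(\mathfrak a\,\H^2 - \tfrac{2}{9}\bigr),
\]
which is exactly \eqref{wp} with invariants $g_2, g_3$ as in \eqref{inv}. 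Since $g_2, g_3 \in \R$, the classification recalled in Remark \ref{realforms} guarantees that $\Q$ is a real form of the Weierstrass $\wp$-function with these invariants, yielding \eqref{Scale}.

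For the conformal energy density I would use the reduced ODE to eliminate $(\S')^2/\S^4 = \H/\S^3 + 1/\S^2 - \mathfrak a/3$ in the first line of \eqref{cmcp}, obtaining $\widetilde{\mathfrak m} = (3/8\pi)\bigl[\H/\S^3 + 2/\S^2 - (\mathfrak a + \Lambda)/3\bigr]$, and then substitute $\S = 3\H/(12\,\Q - 1)$ and factor out $(12\,\Q - 1)^2/(9\,\H^2)$ as a common denominator, producing the product $(1 - 12\,\Q)^2(5 + 12\,\Q)/(9\,\H^2)$. For the conformal pressure, the efficient route is the identity $\mathfrak s^2 = 2/(\Lambda + \mathfrak a - 8\pi\,\mathfrak p)$ established in \eqref{VE22}, which reads $\S^2 = 2/(\Lambda + \mathfrak a - 8\pi\,\widetilde{\mathfrak p})$ in conformal time; solving for $\widetilde{\mathfrak p}$ and substituting $\S^{-2} = (12\,\Q - 1)^2/(9\,\H^2)$ gives the second formula in \eqref{cmp}.

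The main obstacle is locating the correct M\"obius substitution; once it is in hand, every remaining step is routine algebra. The coefficients $3\H$ and $12$ can be motivated by the classical reduction of a quartic first-order ODE to Weierstrass form, in which one translates and rescales $\Q$ so as to kill the $\Q^2$ term and to normalize the coefficient of $\Q^3$ to $4$. Verifying \eqref{Scale} and \eqref{inv} then amounts to matching four rational coefficients in $\Q$, while \eqref{cmp} follows by direct substitution.
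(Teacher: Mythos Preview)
Your argument is correct and follows essentially the same route as the paper: convert \eqref{VE2} to conformal time, apply the M\"obius substitution $\S=3\H/(12\Q-1)$ (equivalently $\Q=\H/4\S+1/12$) to reach the Weierstrass equation with invariants \eqref{inv}, and read off $\widetilde{\m}$ by eliminating $(\S')^2$. The only difference is in the pressure step: the paper differentiates the reduced ODE to obtain $\S''=-\tfrac{2\a}{3}\S^3+\S+\tfrac{\H}{2}$ and substitutes into the second line of \eqref{cmcp}, whereas you invoke the identity \eqref{VE22} directly; your shortcut is valid and slightly cleaner. (Minor quibble: the clearing factor should be $(12\Q-1)^4/(1296\,\H^2)$, not $(12\Q-1)^4/(3\,\H^2)$, but this does not affect the outcome.)
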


\begin{proof}
Differentiating $\S=\s\circ \tau_{\s}^{-1}$ and using \eqref{CT} we have
\begin{equation}\label{ds}
 \dot{\s}|_{\tau_{\s}^{-1}(\tau)}=\S^{-1}(\tau)\S'|_{\tau},
  \end{equation}
where $\dot{f}$ is the derivative with respect to the cosmological time and $h'$ the derivative with respect to the conformal time. Therefore, if $\H\neq 0$, the variational equation \eqref{VE2} is satisfied if and only if
\begin{equation}\label{VE3}
  \S'^2+\frac{\mathfrak{a}}{3}\S^4-\S^2-\H\S=0.
   \end{equation}
Let
\begin{equation}\label{Q}
  \Q:=\frac{\H}{4\S}+\frac{1}{12}.
    \end{equation}
It is an easy matter to check that \eqref{VE3} holds true if and only if
\begin{equation}\label{VE4}
 \Q'^2=4\Q^3-\frac{1}{12}\Q+\frac{1}{48}\Big(\frac{2}{9}-\a\H^2\Big).
   \end{equation}
This proves the first part of the statement.
Substituting \eqref{VE3} into the first equation of \eqref{cmcp} and taking into account \eqref{Scale},
we have
\[
\begin{split}
\widetilde{\m}&= \frac{3}{8\pi}\left[\frac{1}{\S^4} \left(-\frac{\mathfrak{a}}{3}\S^4+\S^2+\H\S\right)
 +\frac{1}{\S^2}- \frac{\Lambda}{3}\right]\\
&=\frac{3}{8\pi}\left(\frac{2}{\S^2}+\frac{\H}{\S^3}-\frac{\mathfrak{a}+\Lambda}{3} \right)\\
&={\frac{1}{8\pi}\left(\frac{(1-12\Q)^2(5+12\Q)}{9\H^2}-\mathfrak{a}-\Lambda \right).}
\end{split}\]
Differentiating \eqref{VE3} yields
\begin{equation}\label{VE5}
\S''=-\frac{2\mathfrak{a}}{3}\S^3 + S + \frac{\H}{2}. 
  \end{equation}
Substituting \eqref{VE3} and \eqref{VE5} into the second equation of \eqref{cmcp}, we obtain
\[
{\begin{split}
\widetilde{\p}&=-\frac{1}{4\pi\S}\left[\frac{1}{\S^2}\Big(-\frac{2\mathfrak{a}}{3}\S^3+\S +\frac{\H}{2}\Big)
 -\frac{1}{2\S^3}\Big(-\frac{\mathfrak{a}}{3}\S^4+\S^2+\H S\Big)+\frac{1}{2\S}-\frac{\Lambda}{2} \S \right] \\
&=\frac{1}{4\pi} \left(\frac{\mathfrak{a}+\Lambda}{2}-\frac{1}{\S^2}\right)=
\frac{1}{4\pi} \left(\frac{\mathfrak{a}+\Lambda}{2}-\frac{(12\Q-1)^2}{9\H^2}\right).
\end{split}
}
\]
This concludes the proof.
\end{proof}

\begin{remark}
The function $\tau_{\s}^{-1}$ is given by the incomplete elliptic integral of the third kind
\begin{equation}\label{ct}
 \tau_{\s}^{-1}(\tau)=3\H\int \frac{d\tau}{12\Q(\tau)-1}.
  \end{equation}
Actually, this integral can be computed in closed form using the $\sigma$ and $\zeta$ Weierstrass functions
(see \cite{La}, p. 173). The explicit expression of $\tau_\s$ cannot be given in closed form. However,
it can be evaluated using numerical solutions of the first order ODE $h'=3\H^{-1}(12\Q(h)-1)$.
\end{remark}

 If $g_2$ and $g_3$ are as in \eqref{inv}, the discriminant $\Delta =16(g_2^3-27g_3^2)$ takes the form
\[
   \Delta = {\frac{1}{48}}\a\H^2(4-9\a\H^2). 
   \]

\begin{defn}
According to whether $\Delta \neq 0$ or $\Delta = 0$, an $\m$-critical RW universe is said to be
\textit{general} or \textit{exceptional}. Moreover, a general $\m$-critical RW universe
is said to be of Class I, if $\Delta >0$,
and of Class II, if $\Delta <0$.

\end{defn}


%
%

\subsection{Exceptional $\m$-critical RW universes}\label{ss:exceptional}
The conformal scale functions of the exceptional $\m$-critical RW universes can be written in terms of trigonometric
or hyperbolic functions.
\begin{itemize}
\item If $\H^2\a=4/9$, then $\H>0$ and
\[
  \S(\tau)=\frac{3\H}{3\tan^2({\tau}/{2})+1}.
  \]
The quadratic form $\ell_{\S}$ vanishes if $\tau = (2h+1)\pi$, $h\in \Z$, and is of Lorentz type at all other
points. The conformal energy density function $\cm$ is nonnegative if and only if $\Lambda\le 1/3\H^2$.
Assuming $\cm\ge 0$, the weak energy condition
is automatically fulfilled, while the dominant energy condition holds true if and only if $\Lambda\le 1/18\H^2$.
The strong energy condition is satisfied if and only if {$-1/2\H^2$}$\le \Lambda \le 1/18\H^2$.

\item If $\a=0$ and $\H<0$, then
\[
   \S(\tau)=-\frac{\H}{2}(1+\cosh \tau ).
   \]
The quadratic form $\ell_{\S}$ is nondegenerate of Lorentz type and $\cm$ is nonnegative
if and only if $\Lambda\le 0$.
Under this assumption the weak and the dominant energy conditions are automatically satisfied.
The strong energy condition is never satisfied.

\item If $\a=0$ and $\H>0$, then
\[
  \S(\tau)=\H\sinh^2(\tau/2).
   \]
Then $\ell_{\S}$ vanishes if $\tau=0$ and is of Lorentz type if $\tau \neq 0$. The conformal energy
density $\cm$ is nonnegative if and only
if $\Lambda\le 0$. Under this hypothesis, the weak and the dominant energy conditions hold true, while the strong
energy condition is satisfied if and only if $\Lambda=0$.
\end{itemize}

\section{Critical RW universes of Class I
}\label{s:2}

This section discusses $\m$-critical RW universes of Class I.
Theorem \ref{Teorema1} describes the main features of the critical models of Class I
with negative $\H$, while Theorem \ref{Teorema2} discusses the critical models of Class I with positive $\H$.

\vskip0.2cm

If $\Delta>0$, the cubic polynomial $P(x)$ has three distinct real roots, say $e_1>e_2>e_3$, such that
\[
   e_1>0,    \quad e_3=-(e_1+ e_2)<0,   \quad 4(e_1^2 + e_1e_2 + e_2^2) = 1/12.
     \]
We write $e_1$ and $e_2$ as functions of the {\it angular parameter} $\vartheta\in (-\pi/3,0)$,
\begin{equation}
\begin{aligned}
e_1(\vartheta) &=\frac{1}{4\sqrt{3}}\left(\frac{1}{\sqrt{3}}\cos \vartheta-\sin \vartheta\right),\\
e_2(\vartheta) &=\frac{1}{4\sqrt{3}}\left(\frac{1}{\sqrt{3}}\cos \vartheta +\sin \vartheta\right).
\end{aligned}
\end{equation}
The invariant $g_3$ and the parameter $\mathfrak{a}$ can be written as functions of $\H$ and $\vartheta$, namely
\begin{equation}\label{inv1}
   g_3 = - \frac{\cos(3\vartheta)}{216},\quad \a = \frac{2}{9\H^2}\left(1-\cos 3 \vartheta \right).
      \end{equation}
Let $\wp_{\vartheta}$ be the Wierstrass $\wp$-function with invariants $g_2=1/12$ and $g_3({\vartheta})$.
Let $\omega_{\vartheta}$ and  $\omega'_{\vartheta}$ be its real and purely imaginary half-periods. Let
\[
 \widetilde{\wp}_{\vartheta}(\tau)=\wp_{\vartheta}(\tau+\omega'_{\vartheta}),\quad
  \widehat{\wp}_{\vartheta}(\tau)=\wp_{\vartheta}(\tau),\quad \forall \tau\in \R,
  \]
be the real forms of $\wp_{\vartheta}$.

\subsection{Critical models of Class I with negative $\mathcal H$}

With the notation introduced above,  let $f_1$, $f_2$ and $f_3$ be defined by
\begin{eqnarray*}
f_1(\vartheta,\H)&=&\frac{1}{3\H^2}\left(1-\cos \vartheta - \sqrt{3}\sin \vartheta\right)^2,\\
f_2(\vartheta,\H)&=&\frac{2}{9\H^2}\sin^2\Big(\frac{\vartheta}{2}\Big)\left(7+\cos\vartheta-2\cos 2\vartheta
- 5\sqrt{3}\sin\vartheta\right),\\
f_3(\vartheta,\H)&=&-\frac{1}{6\H^2}(1-2\cos\vartheta)(1+2\cos\vartheta)^2.
\end{eqnarray*}
{Note that
\[
  f_3(\vartheta,\H)\le f_2(\vartheta,\H),\quad \forall \,\,(\vartheta,\H)\in (-\pi/3,\vartheta_I]\times (-\infty,0),
  \]
where $\vartheta_I\approx -0.7706314502$ is the root in the interval $(-\pi/3,0)$ of the equation
\[
  -10 + 22\cos{x}+11\cos{2x} + 4\cos{3x} + 10\sqrt{3}\sin{x}-5\sqrt{3}\sin{2x}=0.
  \]
}

We can now state the following.

\begin{thm}\label{Teorema1}
Let $\mathcal M(\s)$ be
an $\m$-critical RW spacetime of Class I, with cosmological constant $\Lambda$, angular parameter
$\vartheta\in (-\pi/3,0)$, and negative $\H$. Let $(\RWC_{\H,\vartheta}, \ell_{\H,\vartheta})$ denote
its conformal model.
Then,
\[
  \Lambda \le f_1(\vartheta,\H),\quad \RWC_{\H,\vartheta}=\R\times \SS,
  \]
and $\ell_{\H,\vartheta}$ is the cyclic nondegenerate Lorentz metric
\begin{equation}\label{clr11}
  \ell_{\H,\vartheta}= \frac{9\H^2}{(12\widetilde{\wp}_{\vartheta}-1)^2}(-d\tau^2+g).
   \end{equation}
The weak energy condition is automatically satisfied. The dominant energy condition is satisfied if and only if
$\Lambda\le f_2(\vartheta,\H)$. The strong energy condition is satisfied if and only if
$\vartheta \in (-\pi/3,\vartheta_I]$ and $f_3(\vartheta,\H)\le \Lambda\le f_2(\vartheta,\H)$.
\end{thm}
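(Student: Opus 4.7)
The plan is to exploit Proposition \ref{Proposition2} by setting $u:=12\Q$ and reducing every assertion of the theorem to a polynomial inequality in $u$ over the explicit range determined by whichever real form of $\wp_\vartheta$ is compatible with the positivity of the scale function.

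First, I would single out the correct real form. Since $\H<0$ and $\S=3\H/(12\Q-1)$ must be nonnegative, one needs $\Q<1/12$. For $\vartheta\in(-\pi/3,0)$ the identities $12e_1=\cos\vartheta-\sqrt 3\sin\vartheta\in(1,2)$ and $12e_2=\cos\vartheta+\sqrt 3\sin\vartheta\in(-1,1)$ yield $e_1>1/12>e_2>e_3$. The real form $\widehat{\wp}_\vartheta$, whose image on $\R$ is $[e_1,+\infty)$, is therefore incompatible, while $\widetilde{\wp}_\vartheta$, whose image on $\R$ is $[e_3,e_2]$, stays strictly below $1/12$. Consequently $12\widetilde{\wp}_\vartheta-1$ never vanishes on $\R$, so $\S$ is smooth, strictly positive and $2\omega_\vartheta$-periodic on $\R$, giving $\J=\R$, $\RWC_{\H,\vartheta}=\R\times\SS$, and $\ell_{\H,\vartheta}$ in \eqref{clr11} cyclic and nondegenerate of Lorentz type.

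Next, writing $u=12\widetilde{\wp}_\vartheta$, a direct algebraic manipulation of \eqref{cmp} yields
\begin{equation*}
8\pi\,\cm=\frac{(1-u)^2(5+u)}{9\H^2}-(\a+\Lambda),\qquad
8\pi(\cm+\cp)=\frac{(1-u)^2(3+u)}{9\H^2},
\end{equation*}
\begin{equation*}
8\pi(\cm-\cp)=\frac{(1-u)^2(7+u)}{9\H^2}-2(\a+\Lambda),\qquad
8\pi(\cm+3\cp)=\frac{(u-1)^3}{9\H^2}+2(\a+\Lambda).
\end{equation*}
Since $u\in[12e_3,12e_2]$ with $12e_3=-2\cos\vartheta>-2$, the factor $3+u$ is strictly positive and the weak energy inequality $\cm+\cp\ge 0$ holds without further restriction. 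The three cubics $(1-u)^2(5+u)$, $(1-u)^2(7+u)$ and $(u-1)^3$ are monotone on $[12e_3,12e_2]$ (their interior critical points $-3$, $-13/3$ and $1$ all lie outside this interval), the first two strictly decreasing and the third strictly increasing. Their minima are therefore attained at $u=12e_2$, $u=12e_2$ and $u=12e_3$, respectively, so the requirement $\cm\ge 0$ built into the definition of RW spacetime yields $\Lambda\le f_1$, the dominant inequality $\cm-\cp\ge 0$ yields $\Lambda\le f_2$, and the strong inequality $\cm+3\cp\ge 0$ yields $\Lambda\ge f_3$.

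Finally, I would convert each of these polynomial bounds into the stated trigonometric form by substituting $12e_2=2\cos(\vartheta-\pi/3)$, $12e_3=-2\cos\vartheta$ and $9\H^2\a=2(1-\cos 3\vartheta)$ from \eqref{inv1}; the key algebraic identity is
\begin{equation*}
(1-2\cos\phi)^2(1+\cos\phi)=1+\cos 3\phi,
\end{equation*}
applied with $\phi=\vartheta-\pi/3$ to produce $f_1$ and with $\phi=\pi-\vartheta$ to produce $f_3$. The main obstacle, and the reason the auxiliary angle $\vartheta_I$ enters, lies in the strong energy condition: the two-sided bound $f_3\le\Lambda\le f_2$ has a solution only if $f_3\le f_2$, which upon clearing denominators reduces to
\begin{equation*}
-10+22\cos\vartheta+11\cos 2\vartheta+4\cos 3\vartheta+10\sqrt 3\sin\vartheta-5\sqrt 3\sin 2\vartheta\ge 0.
\end{equation*}
A monotonicity analysis of the left-hand side on $(-\pi/3,0)$ isolates its unique root $\vartheta_I\approx-0.7706314502$ and confirms that the inequality is verified precisely on $(-\pi/3,\vartheta_I]$, completing the proof.
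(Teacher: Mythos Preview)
Your proof is correct and follows essentially the same route as the paper's: select the bounded real form $\widetilde{\wp}_\vartheta$ from the sign of $\H$, express $\cm$, $\cp$ and their relevant linear combinations via Proposition~\ref{Proposition2}, locate their extrema over the range of $\widetilde{\wp}_\vartheta$, and convert the resulting bounds into the trigonometric functions $f_1,f_2,f_3$. The only difference is cosmetic---you work with the auxiliary variable $u=12\Q$ and argue by polynomial monotonicity on $[12e_3,12e_2]$, whereas the paper tracks the extrema in the $\tau$-variable via $\widetilde{\wp}_\vartheta(0)=e_3$ and $\widetilde{\wp}_\vartheta(\omega_\vartheta)=e_2$---but both approaches produce the same endpoint evaluations and the same final inequalities.
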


\begin{proof}
According to Proposition \ref{Proposition1}, we have
$\S=3\H/(12\Q-1)$, where $\Q$ is one of the two possible real forms of $\wp_{\vartheta}$.
Since {$\S \geq 0$} and $\H<0$, it follows that
$\Q$ is bounded above by $1/12$. The roots
\[
  e_1(\vartheta)>e_2(\vartheta)>e_3(\vartheta)=- e_1(\vartheta)-e_2(\vartheta)
   \]
of the polynomial $4t^3-t/12-g_3$ satisfy
\[
  e_3(\vartheta)< e_2(\vartheta)  <  \frac{1}{12} <e_1(\vartheta).
   \]
Since
\[
   e_3(\vartheta)\le \widetilde{\wp}_{\vartheta}(\tau)\le e_2(\vartheta),
   \quad e_1(\vartheta)\le \widehat{\wp}_{\vartheta}(\tau),
    \]
$\Q$ must coincide with $\widetilde{\wp}_{\vartheta}$. This implies that $\S$ is a strictly positive,
real analytic periodic function with minimal period $2\omega_{\vartheta}$. Thus, the conformal model
is $\R\times \SS$ and $\ell_{\S}$ coincides with \eqref{clr11}.

It follows from \eqref{cmp} that the conformal energy density and the conformal pressure are given by
\begin{eqnarray*}
\widetilde{\m} &=&\frac{1}{8\pi}\left(\frac{(1-12\widetilde{\wp}_{\vartheta})^2(5+12
\widetilde{\wp}_{\vartheta})}{9\H^2}-\mathfrak{a}-\Lambda\right),\\
\widetilde{\p} &=&\frac{1}{4\pi} \left(\frac{\mathfrak{a}+\Lambda}{2}-
  \frac{(12\widetilde{\wp}_{\vartheta}-1)^2}{9\H^2}\right),
   \end{eqnarray*}
where $\mathfrak{a}$ is as in \eqref{inv1}.
Thus, $\cm$ and $\cp$ are real analytic periodic functions of
period $2\omega_{\vartheta}$.
The function $\cm$ achieves its maxima at the points $\tau^+_h=2h\omega_{\vartheta}$, $h\in \Z$, and
its minima at $\tau^-_h=(2h+1)\omega_{\vartheta}$, $h\in \Z$.
On the other hand, $\cp$ achieves
its maxima at $\tau^-_h=(2h+1)\omega_{\vartheta}$, $h\in \Z$, and
its minima at $\tau^+_h=2h\omega_{\vartheta}$, $h\in \Z$.

 Taking into account that
\[
  \widetilde{\wp}_{\vartheta}(\omega_{\vartheta})= e_2(\vartheta),
   \quad  \widetilde{\wp}_{\vartheta}(0)= -e_1(\vartheta)- {e_2}(\vartheta),
     \]
it follows that
\begin{equation}\label{a}
\begin{aligned}
\cm(0)&=\frac{3 - 3 \H^2\Lambda + 4 \cos \vartheta +2 \cos 2\vartheta}{24\H^2\pi},\\
\cp(0)&= \frac{9\H^2\Lambda-2 (1 + 2\cos \vartheta)^2\cos\vartheta}{72\H^2\pi},
 \end{aligned}
  \end{equation}
and that
\begin{equation}\label{b}
 \begin{aligned}
\cm(\omega_{\vartheta})&=\frac{1}{24\H^2\pi}\left(\left(1-\cos \vartheta
-\sqrt{3}\sin \vartheta\right)^2  -3\H^2\Lambda   \right),\\
\cp(\omega_{\vartheta})&= \frac{1}{72\H^2\pi}\left(9\H^2\Lambda-4+4\cos \vartheta
+2\cos 2\vartheta -2\cos 3\vartheta
\right.\\
& \qquad\qquad\qquad +\left.  4\sqrt{3}(1-\cos \vartheta)\sin \vartheta\right).
\end{aligned}
 \end{equation}
From \eqref{b}, it follows that
\[
  \min (\widetilde{\m})=\frac{1}{24\H^2\pi}\left(\left(1-\cos\vartheta
   -\sqrt{3}\sin\vartheta\right)^2 -3\Lambda \H^2\right).
     \]
This implies that $\widetilde{\m}\ge 0$ if and only if $\Lambda \le f_1(\vartheta,\H)$, as claimed.

The function $\cm+\cp$ is real analytic and periodic, with period $2\omega_{\vartheta}$, and attains
its minimum at the points $\tau^-_h$.
%
In particular, we compute
\[
  \min(\cm+\cp) = \frac{1}{18\H^2\pi}\sin^2\Big(\frac{\vartheta}{2}\Big)\left(5+5\cos \vartheta+
        2\cos 2\vartheta - \sqrt{3} \sin \vartheta\right).
      \]
Now, the right-hand-side of the previous equation is strictly positive for every $\vartheta\in (-\pi/3,0)$,
which implies that the weak energy condition is automatically satisfied.
Similarly, we have
\[
  \min(\cm-\cp) =-\frac{\Lambda}{4\pi}+\frac{\sin^2(\vartheta/2)}{18\H^2\pi}
    \left(7+\cos \vartheta -2\cos 2\vartheta - 5\sqrt{3} \sin \vartheta\right).
   \]
Thus $\min(\cm-\cp)\ge 0$ if and only if $\Lambda \le f_2(\vartheta,\H)$.
Since $f_2(\vartheta,\H)<f_1(\vartheta,\H)$, for each $\H<0$ and for each $\vartheta\in (-\pi/3,0)$,
it follows that the dominant energy condition is satisfied if and only if $\Lambda \le f_2(\vartheta,\H)$.

 The function $\cp+\cm/3$ is real analytic and periodic, with period $2\omega_{\vartheta}$, and attains
 its minimum at the points $\tau^+_h$. From \eqref{a}, we have
\[
  \min(\cp+\cm/3) =\frac{\Lambda}{12\pi}+
\frac{1}{72\H^2\pi}(1-2\cos \vartheta)(1+2\cos \vartheta)^2.
  \]
Thus, the strong energy condition is satisfied if and only if
$\vartheta \in (-\pi/3,\vartheta_I]$ and $f_3(\vartheta,\H)\le \Lambda \le f_2(\vartheta,\H)$. This concludes the proof.
\end{proof}


\subsection{Critical models of Class I with positive $\mathcal H$}

With the notation introduced above, let $\widehat{f}_1$, $\widehat{f}_2$ and $\widehat{f}_3$ be given by
\begin{eqnarray*}
\widehat{f}_1(\vartheta,\H)&=&\frac{1}{3\H^2}(1-\cos \vartheta + \sqrt{3}\sin \vartheta)^2,\\
\widehat{f}_2(\vartheta,\H)&=&\frac{2}{9\H^2}\sin^2\Big(\frac{\vartheta}{2}\Big)\left(7+\cos \vartheta - 2\cos 2\vartheta
+5\sqrt{3}\sin \vartheta\right),\\
\widehat{f}_3(\vartheta,\H)&=&\frac{1}{6\H^2}\left(1-2\cos \vartheta - \cos 2\vartheta+
2\cos 3\vartheta+
2\sqrt{3}\sin \vartheta-\sqrt{3}\sin 2\vartheta\right).
\end{eqnarray*}
Note that
\[
 \widehat{f}_3(\vartheta,\H)<0<\widehat{f}_2(\vartheta,\H)<\widehat{f}_1(\vartheta,\H),
  \quad \forall \,\, (\H, \vartheta) \in (0,+\infty)\times (-\pi/3,0).
    \]

We can prove the following.

\begin{thm}\label{Teorema2}
Let $\mathcal M(\s)$ be
an $\m$-critical RW space time of Class I, with cosmological constant $\Lambda$,
angular parameter $\vartheta\in (-\pi/3,0)$, and positive $\H$.
Let $(\RWC_{\H,\vartheta}, \ell_{\H,\vartheta})$ denote its conformal model. Then,
\[
 \Lambda \le \widehat{f}_1(\vartheta,\H),\quad \RWC_{\H,\vartheta}=\R\times \SS,
   \]
and $\ell_{\H,\vartheta}$ is the cyclic quadratic form
\begin{equation}\label{clr12}
  \ell_{\H,\vartheta}=
  \frac{9\H^2}{(12\widehat{\wp}_{\vartheta}-1)^2}(-d\tau^2+g).
    \end{equation}
The form $\ell_{\H,\vartheta}$ vanishes along the totally umbilical spacelike hypersurfaces
$$\mathbb{S}^3_h=\{(\tau,p)\in \RWC_{\H,\vartheta}
\mid \tau = 2h\omega_{\vartheta}\},$$ $h\in \Z$, and is
nondegenerate and of Lorentz type on the complement of $\bigcup_{h\in \Z}\mathbb{S}^3_h$.
The weak energy condition is automatically satisfied. The dominant energy condition is satisfied if and only if
$\Lambda\le \widehat{f}_2(\vartheta,\H)$. The strong energy condition is satisfied if and only if
$\widehat{f}_3(\vartheta,\H) \leq \Lambda \leq \widehat{f}_2(\vartheta,\H)$.
\end{thm}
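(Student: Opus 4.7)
The plan is to follow the template of the proof of Theorem~\ref{Teorema1}, propagating the sign change $\H>0$ throughout. By Proposition~\ref{Proposition2} the scale function reads $\S=3\H/(12\Q-1)$, so the combined constraints $\H>0$ and $\S\ge 0$ force $\Q\ge 1/12$. Since the three roots of the relevant cubic satisfy $e_1(\vartheta)>1/12>e_2(\vartheta)>e_3(\vartheta)$, the bounded real form $\widetilde\wp_{\vartheta}$, whose range is $[e_3(\vartheta),e_2(\vartheta)]$, is ruled out, and $\Q$ must be the unbounded real form $\widehat\wp_{\vartheta}$, with range $[e_1(\vartheta),+\infty]$. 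Substituting into \eqref{Scale} yields $\RWC_{\H,\vartheta}=\R\times\SS$ together with formula \eqref{clr12} for $\ell_{\H,\vartheta}$.

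The degeneracy locus is then immediate: $\widehat\wp_{\vartheta}$ is meromorphic of period $2\omega_{\vartheta}$ with double poles exactly at $\tau=2h\omega_{\vartheta}$, $h\in\Z$, so $\S$ vanishes precisely on the slices $\mathbb{S}^3_h$ and is strictly positive elsewhere; hence $\ell_{\H,\vartheta}$ degenerates on $\bigcup_{h}\mathbb{S}^3_h$ and is of Lorentz type on the complement. Total umbilicity of each $\mathbb{S}^3_h$ follows from the standard observation that every constant-$\tau$ slice is totally geodesic in the Einstein static universe $(\R\times\SS,-d\tau^2+g)$, combined with the fact that total umbilicity is preserved under the conformal rescaling by $\S^2$; the property extends continuously across the zeros of $\S$.

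For the energy conditions I would substitute $\Q=\widehat\wp_{\vartheta}$ and the expression \eqref{inv1} for $\a$ into \eqref{cmp}, obtaining $\cm,\cp$ as real analytic periodic functions of period $2\omega_{\vartheta}$, and then express $\cm$, $\cm+\cp$, $\cm-\cp$, $\cp+\cm/3$ as rational functions of $\Q$. A routine derivative check using the factorizations $(1-12\Q)^2(5+12\Q)$, $(12\Q-1)^2(12\Q+3)$, $(12\Q-1)^2(12\Q+7)$, and $(12\Q-1)^3$ respectively shows each of them to be monotone increasing on $[e_1(\vartheta),+\infty]$. Consequently all four minima are attained simultaneously at $\tau=(2h+1)\omega_{\vartheta}$, where $\widehat\wp_{\vartheta}=e_1(\vartheta)$. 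The minimum of $\cm+\cp$ is manifestly positive, which yields the weak energy condition for free, while nonnegativity of $\min(\cm)$, $\min(\cm-\cp)$, $\min(\cp+\cm/3)$ translates, after using $12e_1(\vartheta)=\cos\vartheta-\sqrt{3}\sin\vartheta$ together with the triple-angle identities, into the bounds $\Lambda\le\widehat{f}_1$, $\Lambda\le\widehat{f}_2$, and $\widehat{f}_3\le\Lambda$ respectively. The strict chain $\widehat{f}_3<0<\widehat{f}_2<\widehat{f}_1$ noted before the theorem then packages these as the stated equivalences.

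The main obstacle I expect is the algebraic bookkeeping required to recognize the closed-form values of $\cm$, $\cm-\cp$, $\cp+\cm/3$ at $\Q=e_1(\vartheta)$ as the stated trigonometric expressions $\widehat{f}_1,\widehat{f}_2,\widehat{f}_3$, and to verify the strict chain $\widehat{f}_3<0<\widehat{f}_2<\widehat{f}_1$ uniformly on $(\H,\vartheta)\in(0,+\infty)\times(-\pi/3,0)$. A qualitative difference from Theorem~\ref{Teorema1} is that here the strong energy condition is available on the whole parameter range, not only on a subinterval cut out by a transcendental threshold; verifying that the underlying polynomial-in-$(\cos\vartheta,\sin\vartheta)$ expressions keep the predicted sign on the entire interval $(-\pi/3,0)$ should be manageable via a half-angle substitution.
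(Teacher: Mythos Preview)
Your proposal is correct and follows essentially the same route as the paper's proof: identify $\Q=\widehat{\wp}_{\vartheta}$ from the sign constraint, read off the degeneracy locus from the double poles, and evaluate $\cm$, $\cm\pm\cp$, $\cm/3+\cp$ at $\Q=e_1(\vartheta)$ to obtain the bounds $\widehat f_1,\widehat f_2,\widehat f_3$. Your monotonicity argument via the factorizations $(12\Q-1)^2(12\Q+3)$, $(12\Q-1)^2(12\Q+7)$, $(12\Q-1)^3$ is a clean way to justify that all minima occur at $\Q=e_1(\vartheta)$, a point the paper simply asserts; one small slip is that $\cm$ and $\cp$ are not real analytic on all of $\R$ but only on the open intervals $(2h\omega_{\vartheta},2(h+1)\omega_{\vartheta})$, with poles at the lattice points, as the paper notes.
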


\begin{proof}
Arguing as in the first part of the proof of Theorem \ref{Teorema1}, one sees that the function $\Q$ must
coincide with $\widehat{\wp}_{\vartheta}$, and hence the conformal scale function
\[
   \S(\tau)=\frac{3\H}{12\widehat{\wp}_{\vartheta}(\tau)-1}.
     \]
Then, $\S$ is nonnegative, periodic, with minimal period $2\omega_{\vartheta}$, and has zeroes of second
order at the points $2h\omega_{\vartheta}$, $h\in \Z$. This implies that $\ell_{\H,\vartheta}$
is as in \eqref{clr12}. Hence, $\ell_{\H,\vartheta}$ is defined on $\RWC_{\H,\vartheta}=\R\times \SS$,
is nondegenerate and of Lorentz type on the strips
$(2h\omega_{\vartheta},2(h+1)\omega_{\vartheta})\times \SS$, $h\in \Z$, and vanishes
when $\tau = 2h\omega_{\vartheta}$.
Proceeding as in the proof of the previous theorem, one deduces that
\begin{equation}\label{ccpm}
 \begin{aligned}
  \widetilde{\m} &=\frac{1}{8\pi}\left(\frac{(1-12\widehat{\wp}_{\vartheta})^2(5+12
   \widehat{\wp}_{\vartheta})}{9\H^2}-\mathfrak{a}-\Lambda\right),\\
   \widetilde{\p} &=\frac{1}{4\pi} \left(\frac{\mathfrak{a}+\Lambda}{2}-
    \frac{(12\widehat{\wp}_{\vartheta}-1)^2}{9\H^2}\right),
     \end{aligned}
      \end{equation}
where $\mathfrak{a}$ is as in \eqref{inv1}.
Thus, $\cm$ and $\cp$ are periodic function of period $2\omega_{\vartheta}$, real analytic on the intervals $(2h\omega_{\vartheta},2(h+1)\omega_{\vartheta})$, with poles of finite order at $2h\omega_{\vartheta}$,
$h\in \mathbb Z$.

The value of $\cm$ is minimum at $\tau_h=(2h+1)\omega_{\vartheta}$, $h\in \Z$.
Since $\widehat{\wp}_{\vartheta}(\omega_{\vartheta})= e_1(\vartheta)$, we have
\[
 \min(\widetilde{\m})=
  \frac{1}{24\pi\H^2}\left(-3\H^2\Lambda+\left(1-\cos \vartheta +\sqrt{3}\sin \vartheta\right)^2\right).
   \]
Thus $\cm\ge 0$ if and only if $\Lambda\le \widehat{f}_1(\vartheta,\H)$.
Similarly, $\cm+\cp$, $\cm-\cp$, and $\cm/3+\cp$ are periodic with period $2\omega_{\vartheta}$, real analytic on $(2h\omega_{\vartheta},2(h+1)\omega_{\vartheta})$, and have poles of finite order when
$\tau = 2h\omega_{\vartheta}$. They achieve their minima at $\tau_h=(2h+1)\omega_{\vartheta}$, $h\in \Z$. This implies that
\[
 \min(\widetilde{\m}+\cp)=\frac{\sin^2(\vartheta/2)}{18\pi \H^2}\left(5+5\cos \vartheta
   +2\cos 2\vartheta + \sqrt{3}\sin \vartheta\right).
   \]
Since the right-hand-side of the previous equation is positive, for each $\H>0$ and for each $\vartheta \in (-\pi/3,0)$,
the weak energy condition is automatically satisfied. Similarly, we have
\[
 \min(\widetilde{\m}-\cp)=\frac{\sin^2(\vartheta/2)}{18\pi \H^2}\left(7+\cos \vartheta
    +2\cos 2\vartheta+5\sqrt{3}\sin \vartheta\right)-\frac{\Lambda}{4\pi}.
   \]
Therefore, $\widetilde{\m}-\cp$ is a nonnegative function if and only if $\Lambda \le \widehat{f}_2(\vartheta,\H)$.
On the other hand, $\widehat{f}_2(\vartheta,\H)<\widehat{f}_1(\vartheta,\H)$, for every $\H$ and for every
$\vartheta\in (-\pi/3,0)$. This implies that the dominant energy condition is satisfied if and only if $\Lambda \le \widehat{f}_2(\vartheta,\H)$.
Finally, the minimum of $\cm/3+\cp$ is
\[
 \begin{split}\min(\widetilde{\m}/3+\cp)=&\frac{1}{72\pi \H^2}\left(-1+2\cos \vartheta
  +\cos 2\vartheta - 2\cos 3\vartheta \right.\\
   &\qquad\qquad \left. -2\sqrt{3}\sin \vartheta+
    \sqrt{3}\sin 2\vartheta \right)+\frac{\Lambda}{12\pi}.
     \end{split}
       \]
Hence, $\widetilde{\m}/3+\cp$ is nonnegative if and only if $\widehat{f}_3(\vartheta,\H)\le \Lambda$.
The strong energy condition is satisfied if and only if
$\widehat{f}_3(\vartheta,\H)\le \Lambda\le \widehat{f}_2(\vartheta,\H)$.
\end{proof}

\section{Critical RW universes of Class II
}\label{s:3}

This section discusses the $\m$-critical RW universes of Class II. Two possible types are distinguished:
negative and positive types. Theorem \ref{Teorema3} provides a complete description of the critical
models of negative type. Theorem \ref{Teorema4} deals with the critical models of positive type with
$\H>0$, while Theorem \ref{Teorema5} describes the main features of the critical
models of positive type with $\H<0$.

\vskip0.2cm

If $\Delta<0$, the cubic polynomial $P(x)$ has two complex conjugate roots, $e_1+ie_2$, $e_1-ie_2$, $e_2>0$,
and a real root, $-2e_1$, such that $144e_1^2-48e_2^2=1$. Consequently, we can write
\begin{equation}\label{TI}
 e_1=\frac{\varepsilon}{12}\cosh \vartheta,
 \quad e_2=\frac{1}{4\sqrt{3}}\sinh \vartheta,\quad \vartheta >0,
  \quad \varepsilon = \pm1.
   \end{equation}
If $\varepsilon=1$, we say that the $\m$-critical RW spacetime is of {\it positive type}, while, if $\varepsilon=-1$,
we say that it is of {\it negative type}.
We call $\vartheta>0$ the {\em angular parameter}. The invariant $g_3$ and the parameter
$\mathfrak{a}$ can be written as
\begin{equation}\label{inv2}
  g_3=-\frac{\varepsilon \cosh 3\vartheta}{216},\quad
   \mathfrak{a}=\frac{2}{9\H^2}\left(1-\varepsilon \cosh 3\vartheta\right).
   \end{equation}
Let $\wp_{\vartheta,\varepsilon}$ denote the Weierstrass $\wp$-function with invariants $g_2=1/12$ and $g_3$.
Let $\omega_{\vartheta,\varepsilon}$ be its real half-period.
Then, $\wp_{\vartheta,\varepsilon}$ has the unique real form
$\widehat{\wp}_{\vartheta,\varepsilon}(\tau)=\wp_{\vartheta,\varepsilon}(\tau)$, for every $\tau\in \R$.
The function $\widehat{\wp}_{\vartheta,\varepsilon}$ is periodic, with minimal period
$2\omega_{\vartheta,\varepsilon}$,
is real analytic on the open intervals
$(2h\omega_{\vartheta,\varepsilon},2(h+1)\omega_{\vartheta,\varepsilon})$,
$h\in \Z$, and possesses double poles at the points $2h\omega_{\vartheta,\varepsilon}$, $h\in \Z$.
Moreover, $\widehat{\wp}_{\vartheta,\varepsilon}(\tau)\ge -2e_1(\vartheta,\varepsilon)$, for all $\tau\in \R$,
and $\widehat{\wp}_{\vartheta,\varepsilon}(\tau)=-2e_1(\vartheta,\varepsilon)$ if and only if $\tau = \tau_h= (2h+1)\omega_{\vartheta,\varepsilon}$, $h\in \Z$.

\subsection{Critical models of Class II and negative type}

With the notation above, let $\widehat{h}_1$, $\widehat{h}_2$ and $\widehat{h}_3$ be defined by
\begin{eqnarray*}
 \widehat{h}_1(\vartheta,\H)&=&\frac{1}{3\H^2}\left(1-2\cosh\vartheta\right)^2,\\
 \widehat{h}_2(\vartheta,\H)&=&\frac{1}{18\H^2}\left(3-2\cosh \vartheta\right)\left(1-2\cosh \vartheta\right)^2,\\
 \widehat{h}_3(\vartheta,\H)&=&-\frac{1}{6\H^2}\left(1+2\cosh \vartheta\right)\left(1-2\cosh \vartheta\right)^2.
   \end{eqnarray*}
Note that $\widehat{h}_3(\vartheta,\H)<\widehat{h}_2(\vartheta,\H)<\widehat{h}_1(\vartheta,\H)$,
for every $\H$ and for every $\vartheta >0$.

\vskip0.2cm
We can prove the following.

\begin{thm}\label{Teorema3}
Let $\mathcal M(\s)$ be
an $\m$-critical RW spacetime of Class II and negative type, with cosmological constant $\Lambda$, and
angular parameter $\vartheta>0$.
{(In this case, $\H$ is necessarily positive.)}
Let $(\RWC_{\H,\vartheta}, \ell_{\H,\vartheta})$ denote its conformal model. Then,
\[
 \Lambda \le \widehat{h}_1(\vartheta,\H),\quad \RWC_{\H,\vartheta}=\R\times \SS,
  \]
and $\ell_{\H,\vartheta}$ is the cyclic quadratic form
\begin{equation}\label{clr123}
 \ell_{\H,\vartheta}=
  \frac{9\H^2}{(12\widehat{\wp}_{\vartheta,-1}-1)^2}(-d\tau^2+g).
   \end{equation}
The form $\ell_{\H,\vartheta}$ vanishes along the totally umbilical spacelike hypersurfaces
\[
  \mathbb{S}^3_h=\{(\tau,p)\in \RWC_{\H,\vartheta} \mid \tau = 2h\omega_{\vartheta,-1}\},
   \]
$h\in \Z$, and is nondegenerate and of Lorentz type on the complement
of $\bigcup_{h\in \Z}\mathbb{S}^3_h$.
The weak energy condition is automatically satisfied. The dominant energy condition is satisfied
if and only if $\Lambda\le \widehat{h}_2(\vartheta,\H)$. The strong energy condition is satisfied if
and only if
$\widehat{h}_3(\vartheta,\H)\leq \Lambda \leq\widehat{h}_2(\vartheta,\H)$.
\end{thm}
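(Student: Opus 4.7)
The plan is to adapt the three-phase strategy of Theorems~\ref{Teorema1} and \ref{Teorema2}, exploiting the fact that, since $\Delta<0$, the function $\wp_{\vartheta,-1}$ admits the unique real form $\Q=\widehat{\wp}_{\vartheta,-1}$. The first task is to verify the parenthetical claim $\H>0$. Since $\widehat{\wp}_{\vartheta,-1}(\tau)\ge -2e_1(\vartheta,-1)=\tfrac{1}{6}\cosh\vartheta>\tfrac{1}{12}$ (because $\cosh\vartheta>1$ for $\vartheta>0$), the denominator $12\Q-1\ge 2\cosh\vartheta-1>0$ in the expression $\S=3\H/(12\Q-1)$ is strictly positive for every $\tau$, so the nonnegativity of $\S$ guaranteed by Proposition~\ref{Proposition2} forces $\H>0$.

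Next I would record the analytic-geometric picture. Because $\widehat{\wp}_{\vartheta,-1}$ is $2\omega_{\vartheta,-1}$-periodic with double poles at $\tau=2h\omega_{\vartheta,-1}$, $h\in\Z$, the conformal scale function $\S$ is smooth, nonnegative, and periodic on all of $\R$, vanishing to second order exactly at those poles. Hence $\RWC_{\H,\vartheta}=\R\times\SS$, formula~\eqref{clr123} follows directly from \eqref{Scale}, and the zero-locus of $\S$ consists of the countably many slices $\mathbb{S}^3_h=\{2h\omega_{\vartheta,-1}\}\times\SS$. These are totally umbilical spacelike hypersurfaces by the product structure of the conformally static background (as in the proof of Theorem~\ref{Teorema2}), and $\ell_{\H,\vartheta}$ is of Lorentz type wherever $\S>0$.

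The bulk of the work is the energy-condition analysis. Substituting $\Q=\widehat{\wp}_{\vartheta,-1}$ and $\a=\tfrac{2}{9\H^2}(1+\cosh 3\vartheta)$ (from \eqref{inv2} with $\varepsilon=-1$) into \eqref{cmp}, a direct simplification yields the $\Lambda$-independent identity
\[
\widetilde{\m}+\widetilde{\p}=\frac{(1-12\Q)^2(3+12\Q)}{72\pi\H^2},
\]
which is nonnegative throughout the range $[\tfrac{1}{6}\cosh\vartheta,+\infty)$ of $\Q$; hence the weak energy condition is automatic. A derivative check on the same range ($\Q>1/12$) shows that $\widetilde{\m}$, $\widetilde{\m}-\widetilde{\p}$, and $\widetilde{\m}/3+\widetilde{\p}$ are each strictly increasing in $\Q$, so on the nondegeneracy domain each attains its infimum at $\tau=(2h+1)\omega_{\vartheta,-1}$, where $\Q=\tfrac{1}{6}\cosh\vartheta$.

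Finally, evaluating at $\Q=\tfrac{1}{6}\cosh\vartheta$ and using $\cosh 3\vartheta=4\cosh^3\vartheta-3\cosh\vartheta$ to cancel the $\cosh 3\vartheta$-term in $\a$, each of the three infima collapses---after factoring out $(1-2\cosh\vartheta)^2$---to a simple polynomial expression in $\cosh\vartheta$, producing exactly the bounds $\Lambda\le\widehat{h}_1$, $\Lambda\le\widehat{h}_2$, and $\Lambda\ge\widehat{h}_3$. A one-line verification gives $\widehat{h}_3<\widehat{h}_2<\widehat{h}_1$ for all $\vartheta>0$ and all $\H>0$, so the dominant condition is binding on the upper side and the strong condition carves out the interval $[\widehat{h}_3,\widehat{h}_2]$. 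The only real obstacle is the bookkeeping of the polynomial collapses; they are structurally identical to those in Theorems~\ref{Teorema1} and \ref{Teorema2} but with hyperbolic substitutes for the trigonometric identities, so no new conceptual step is required beyond the uniqueness of the real form $\widehat{\wp}_{\vartheta,-1}$.
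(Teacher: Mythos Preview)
Your proposal is correct and follows essentially the same route as the paper: uniqueness of the real form forces $\Q=\widehat{\wp}_{\vartheta,-1}$, the bound $\widehat{\wp}_{\vartheta,-1}\ge\tfrac{1}{6}\cosh\vartheta>\tfrac{1}{12}$ forces $\H>0$ and gives the cyclic degenerate picture, and the energy-condition thresholds are read off by evaluating $\cm$, $\cm\pm\cp$, $\cm/3+\cp$ at the minimum value $\Q=\tfrac{1}{6}\cosh\vartheta$. Your explicit monotonicity-in-$\Q$ check and the $\Lambda$-free identity $\cm+\cp=(1-12\Q)^2(3+12\Q)/(72\pi\H^2)$ are a slightly tidier packaging of exactly the computation the paper carries out.
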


\begin{proof}
The conformal scale function $\S$ is given by
\[
   \S(\tau)=\frac{3\H}{12\widehat{\wp}_{\vartheta,-1}(\tau)-1}.
    \]
Since $\widehat{\wp}_{\vartheta,-1}\ge -2e_1(\vartheta,\H)>1/12$ and $\S>0$, then $\H>0$.
By construction, $\S$ is
periodic, with minimal period $2\omega_{\vartheta,-1}$ and has zeroes of order $2$ located at
the points $2h\omega_{\vartheta,-1}$, $h\in \Z$. This implies that $\ell_{\H,\vartheta}$ is as in \eqref{clr12}.
Then, $\ell_{\H,\vartheta}$ is smooth on $\RWC_{\H,\vartheta}=\R\times \SS$, is nondegenerate and of
Lorentz type on $(2h\omega_{\vartheta,-1},2(h+1)\omega_{\vartheta,-1})\times \SS$, $h\in \Z$,
and vanishes at $2h\omega_{\vartheta,-1}$. The functions $\cm$ and $\cp$ are as in \eqref{ccpm}
and $\mathfrak{a}$ is as in \eqref{inv2}. Thus, $\cm$ and $\cp$ are periodic function of period
$2\omega_{\vartheta,-1}$, real analytic on $(2h\omega_{\vartheta,-1},2(h+1)\omega_{\vartheta,-1})$,
and with poles of finite order at $2h\omega_{\vartheta,-1}$. The function $\cm$ achieves its minimum
at $\tau_h=(2h+1)\omega_{\vartheta,-1}$, $h\in \Z$.
Taking into account that
\[
  \widehat{\wp}_{\vartheta,-1}(\omega_{\vartheta,-1})= -2e_1(\vartheta)=\frac{1}{6}\cosh \vartheta
  \]
we have
\[
  \min(\widetilde{\m})=\frac{1}{24\pi \H^2}(-3\H^2\Lambda+(1-2\cosh\vartheta)^2).
   \]
Thus, $\cm$ is nonnegative if and only if $\Lambda \le \widehat{h}_1(\vartheta,\H)$.
The functions $\cm+\cp$, $\cm-\cp$, and $\cm/3+\cp$ are periodic, with period $2\omega_{\vartheta,-1}$,
and attain their minima at the points $\tau_h=(2h+1)\omega_{\vartheta,-1}$, $h\in \Z$. Then
\begin{eqnarray*}
 \min(\widetilde{\m}+\cp)&=&\frac{1}{72\pi \H^2}\left(1-2\cosh\vartheta\right)^2\left(3+2\cosh \vartheta\right)>0,\\
  \min(\widetilde{\m}-\cp)&=&-\frac{\Lambda}{4\pi}     +\frac{1}{72\pi   \H^2}\left(1-2\cosh\vartheta\right)^2\left(3-2\cosh\vartheta\right),\\
\min(\widetilde{\m}/3+\cp)&=&\frac{\Lambda}{12\pi}+\frac{1}{72\pi \H^2}\left(1-2\cosh \vartheta\right)^2\left(1+2\cosh \vartheta\right).
    \end{eqnarray*}
These formulae imply that the weak energy condition is automatically satisfied, that the dominant energy condition
is satisfied if and only if $\Lambda\le \widehat{h}_2(\vartheta,\H)$, and that the strong energy condition
is satisfied if and only if $\widehat{h}_3(\vartheta,\H)\le \Lambda\le \widehat{h}_2(\vartheta,\H)$.
\end{proof}

\subsection{Critical models of Class II and positive type}
Retaining the notation introduced at the beginning of the section, we now consider the $\m$-critical RW
universes of Class II and positive type. In this case, the real root is
\[
   -2e_1(\vartheta)=-\frac{1}{6}\cosh \vartheta < \frac{1}{12}.
     \]
Thus, there exists a unique $\tau^*\in (0,\omega_{\vartheta,1})$, such that $\widehat{\wp}_{\vartheta,1}(\tau^*)=1/12$.
The zeroes of the equation $\widehat{\wp}_{\vartheta,1}=1/12$ are
\[
  \tau^*_h=\tau^*+2h\omega_{\vartheta,1},\quad \widehat{\tau}^*_h=-\tau^*+2h\omega_{\vartheta,1},\quad h\in \Z.
  \]


If $\H >0$, the conformal scale function $\S$ is positive on the
intervals $ \mathrm{I}'_h=(\widehat{\tau}^*_{h},\tau^*_h)$,
$h\in \mathbb Z$, and is negative on the intervals $\mathrm{I}''_h=(\tau^*_h, \widehat{\tau}^*_{h+1})$,
 $h\in \mathbb Z$.
On the other hand, if $\H <0$, the conformal scale function $\S$ is positive on the
intervals $\mathrm{I}''_h=(\tau^*_h, \widehat{\tau}^*_{h+1})$,
$h\in \mathbb Z$, and is negative on the intervals $ \mathrm{I}'_h=(\widehat{\tau}^*_{h},\tau^*_h)$,
$h\in \mathbb Z$.

Taking into account that $\S$ is by definition a nonnegative function and
that $\widehat{\wp}_{\vartheta,1}$ is periodic with period $2\omega_{\vartheta,1}$,
we may consider $\mathrm{I}':=\mathrm{I}'_0$ or $\mathrm{I}'':=\mathrm{I}''_0$ as the maximal intervals
of definition for $\S$, depending on whether $\H$ is positive or negative.

\vskip0.2cm
For the case in which $\H$ is positive, we have the following.

\begin{thm}\label{Teorema4}
Let $\mathcal M(\s)$ be
an $\m$-critical RW space time of Class II and positive type, with cosmological constant $\Lambda$,
angular parameter $\vartheta>0$, and positive internal parameter $\H$.
Let $(\RWC_{\H,\vartheta}, \ell_{\H,\vartheta})$
denote its conformal model. Then,
\[
  \Lambda \le \frac{2\cosh 3\vartheta -2}{9\H^2},\quad \RWC_{\H,\vartheta}=\mathrm{I}'\times \SS,
   \]
and $\ell_{\H,\vartheta}$ is the non-cyclic quadratic form
\begin{equation}\label{ecf}
\ell_{\H,\vartheta}=
 \frac{9\H^2}{(12\widehat{\wp}_{\vartheta,1}-1)^2}(-d\tau^2+g).
  \end{equation}
The form $\ell_{\H,\vartheta}$ vanishes along the totally umbilical spacelike hypersurface
$$
   \mathbb{S}^3_0=\left\{(\tau,p)\in \RWC_{\H,\vartheta} \mid \tau = 0\right\}
     $$
and is nondegenerate of Lorentzian signature on the complement of $\mathbb{S}^3_0$.
The weak and the dominant energy conditions are automatically satisfied.
The strong energy condition is satisfied if and only if $\Lambda = 2(\cosh 3\vartheta -1)/9\H^2$.
\end{thm}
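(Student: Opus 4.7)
The plan is to follow the same template used in the proofs of Theorems \ref{Teorema1}--\ref{Teorema3}, now specialized to $\Delta<0$ (Class II), $\varepsilon=1$ (positive type), and $\H>0$. Since in Class II there is a unique real form of $\wp_{\vartheta,1}$, the function $\Q$ in Proposition \ref{Proposition2} must coincide with $\widehat{\wp}_{\vartheta,1}$, giving $\S=3\H/(12\widehat{\wp}_{\vartheta,1}-1)$. The first step is to determine the maximal interval of definition. On $(-\omega_{\vartheta,1},\omega_{\vartheta,1})$ the function $\widehat{\wp}_{\vartheta,1}$ descends from $+\infty$ at the double pole $\tau=0$ to its minimum $-2e_1(\vartheta)=-\tfrac{1}{6}\cosh\vartheta<\tfrac{1}{12}$, so by the intermediate value theorem it crosses the level $1/12$ precisely at the two points $\tau=\pm\tau^*$. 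Thus $12\widehat{\wp}_{\vartheta,1}-1>0$ on $\mathrm{I}'=(-\tau^*,\tau^*)\setminus\{0\}$, and together with $\H>0$ this makes $\S$ a nonnegative smooth function on $\mathrm{I}'$ with a double zero at $\tau=0$ and vertical asymptotes at $\pm\tau^*$. Since $\S$ cannot be extended as a nonnegative differentiable function past either endpoint, the maximality of $\mathrm{I}'$ is forced and the conformal model is non-cyclic.

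The geometric statement about $\ell_{\H,\vartheta}$ follows at once: formula \eqref{ecf} is read off from Proposition \ref{Proposition2}, the vanishing locus is exactly $\{\S=0\}=\mathbb{S}^3_0$, and elsewhere the metric is a conformal rescaling of the nondegenerate Lorentz metric $-d\tau^2+g$, hence of Lorentz type. Total umbilicity of $\mathbb{S}^3_0$ is a standard consequence of the warped-product form $\Lr=-dt^2+\s^2 g$.

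For the energy conditions, I exploit formulas \eqref{ccpm} to write the combinations $\cm$, $\cm+\cp$, $\cm-\cp$, $\cm/3+\cp$ as polynomials in $12\Q-1$ plus constants in $\mathfrak{a}+\Lambda$. A short computation gives
\begin{equation*}
\cm+\cp=\frac{(12\Q-1)^2(12\Q+3)}{72\pi\H^2},\qquad
\cm/3+\cp=\frac{(12\Q-1)^3}{216\pi\H^2}+\frac{\mathfrak{a}+\Lambda}{12\pi},
\end{equation*}
together with $\cm=\frac{(12\Q-1)^2(5+12\Q)}{72\pi\H^2}-\frac{\mathfrak{a}+\Lambda}{8\pi}$ and $\cm-\cp=\frac{(12\Q-1)^2(12\Q+7)}{72\pi\H^2}-\frac{\mathfrak{a}+\Lambda}{4\pi}$. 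On $\mathrm{I}'$ the quantity $12\Q-1$ is nonnegative and, since $\widehat{\wp}_{\vartheta,1}$ is monotone on each component of $\mathrm{I}'\setminus\{0\}$, decreases monotonically from $+\infty$ (at $\tau=0$) to $0$ (as $\tau\to\pm\tau^*$). Hence each of the four expressions is monotone in $|\tau|$ and attains its infimum in the limit $12\Q\to 1$. In that limit $\cm+\cp\to 0$, so the weak energy condition is automatic; using $\mathfrak{a}=2(1-\cosh 3\vartheta)/9\H^2$ from \eqref{inv2}, one has $\inf\cm=-(\mathfrak{a}+\Lambda)/8\pi$, so $\cm\ge 0$ is equivalent to the stated bound $\Lambda\le 2(\cosh 3\vartheta-1)/9\H^2$. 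Under this bound $\mathfrak{a}+\Lambda\le 0$, which immediately gives $\inf(\cm-\cp)=-(\mathfrak{a}+\Lambda)/4\pi\ge 0$, so the dominant condition is automatic. Finally the strong condition requires also $\inf(\cm/3+\cp)=(\mathfrak{a}+\Lambda)/12\pi\ge 0$; combined with $\mathfrak{a}+\Lambda\le 0$ this forces the equality $\Lambda=2(\cosh 3\vartheta-1)/9\H^2$.

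The main obstacle is more bookkeeping than conceptual: the algebraic factorizations isolating $(12\Q-1)^k$ in each combination of $\cm$ and $\cp$ must be carried out cleanly so that every expression splits into a manifestly nonnegative polynomial term plus an affine function of $\mathfrak{a}+\Lambda$. Once these factorizations are in place, monotonicity of $\widehat{\wp}_{\vartheta,1}$ on $(0,\tau^*)$ yields the infima in the limit $\tau\to\pm\tau^*$ without further work, and the bounds on $\Lambda$ follow by direct substitution of $\mathfrak{a}$.
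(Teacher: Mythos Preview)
Your proposal is correct and follows essentially the same approach as the paper: identify $\Q=\widehat{\wp}_{\vartheta,1}$, determine the maximal interval $\mathrm I'=(-\tau^*,\tau^*)$ from the sign of $12\Q-1$, read off the metric and its degeneracy locus, and then locate the infima of $\cm$, $\cm\pm\cp$, $\cm/3+\cp$ at the boundary $12\Q\to 1$ to obtain the bounds on $\Lambda$. Your explicit factorizations of the energy combinations as polynomials in $12\Q-1$ make the monotonicity and the location of the infima more transparent than the paper's direct evaluation at $\pm\tau^*$, but the underlying argument is the same.
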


\begin{proof}
Let $\mathrm I' =\mathrm{I}'_{0}$ be the maximal domain of definition of $\S$.
Since $\H>0$, we have that $\widehat{\wp}_{\vartheta,1}(\tau)>1/12$, for each $\tau\in \mathrm{I}'$.
%
%
This implies that $\RWC_{\H,\vartheta}=\mathrm{I}'\times \SS$ and that the quadratic form
$\ell_{\H,\vartheta}$ is as in \eqref{ecf}.
The conformal factor $9\H^2/(12\widehat{\wp}_{\vartheta,1}-1)^2$ vanishes at $\tau=0$,
is strictly positive
for each $\tau\in \mathrm I'$, $\tau\neq 0$, and tends to $+\infty$ when $\tau \mapsto \pm \tau^*$.
The energy density $\cm$ and the pressure $\cp$ are as in \eqref{ccpm},
with $\mathfrak{a}=\frac{2}{9\H^2}(1-\cosh 3\vartheta)$.
 The density $\cm$ has a pole at the origin  and attains its minimum at $\pm \tau^*$.
 Taking into account that
\[
  \widehat{\wp}_{\vartheta,1}(\pm \tau^*)=\frac{1}{12},
  \quad \widehat{\wp}'_{\vartheta,1}(\pm \tau^*)=
   \mp \frac{1}{6\sqrt{6}}\sqrt{\cosh 3\vartheta -1},
    \quad \widehat{\wp}''_{\vartheta,1}(\pm \tau^*)=-\frac{1}{8},
     \]
we obtain
\[
  \min(\cm)=\frac{1}{72\pi \H^2}\left(2\cosh 3\vartheta - 9\H^2\Lambda-2\right).
   \]
Therefore, $\cm\ge 0$ if and only if
\[
   \Lambda \le \frac{2\cosh 3\vartheta - 2}{9\H^2}.
    \]
Similarly, $\cm+\cp$, $\cm-\cp$, and $\cm/3+\cp$ attain their minima at $\pm \tau^*$.
In particular, $\min(\cp) = -\min(\widetilde{\m})$.
Consequently, we have
\begin{eqnarray*}
&&\min(\widetilde{\m}+\cp)=0,\\
&&\min(\widetilde{\m}-\cp)=\frac{1}{36\pi \H^2}\left(2\cosh 3\vartheta -9\H^2\Lambda-2\right),\\
&&\min(\widetilde{\m}/3+\cp)=-\frac{1}{108\pi \H^2}\left(2\cosh 3\vartheta -9\H^2\Lambda-2\right).
   \end{eqnarray*}
This implies that the weak and the dominant energy conditions are automatically satisfied, while the strong energy
 condition is enforced if and only if $\Lambda = 2(\cosh(3\vartheta)-1)/9\H^2$. This proves the result.
 \end{proof}


\vskip0.2cm
As for the case in which $\H$ is negative, we can prove the following.

\begin{thm}\label{Teorema5}
Let $\mathcal M(\s)$ be
an $\m$-critical RW space time of Class II and positive type, with cosmological constant $\Lambda$,
angular parameter $\vartheta>0$, and negative internal parameter $\H$.
Let $(\RWC_{\H,\vartheta}, \ell_{\H,\vartheta})$ denote
its conformal model. Then,
\[
  \Lambda \le \min\left(\frac{2\cosh 3\vartheta -2}{9\H^2},
 \frac{\left(1+2\cosh \vartheta\right)^2}{3\H^2} \right),
   \quad \RWC_{\H,\vartheta}=\mathrm{I}''\times \SS,
   \]
and $\ell_{\H,\vartheta}$ is the non-cyclic Lorentz metric
\begin{equation}\label{ecf5}
 \ell_{\H,\vartheta}=
  \frac{9\H^2}{(12\widehat{\wp}_{\vartheta,1}-1)^2}(-d\tau^2+g).
    \end{equation}
The weak and the dominant energy conditions are satisfied if and only if
$0<\vartheta\leq\mathrm{arcosh}(3/2)$.
The strong energy condition is satisfied if and only if
$0<\vartheta\leq\mathrm{arcosh}(3/2)$
and $\Lambda=2(\cosh 3\vartheta - 1)/9\H^2$.
\end{thm}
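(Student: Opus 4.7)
The plan is to mirror the proof of Theorem~\ref{Teorema4}, carefully tracking the sign reversals introduced by $\H<0$. From Proposition~\ref{Proposition2}, $\S=3\H/(12\widehat{\wp}_{\vartheta,1}-1)$, and the requirement $\S\ge 0$ together with $\H<0$ forces $12\widehat{\wp}_{\vartheta,1}-1\le 0$; this selects $\mathrm{I}''=(\tau^*,2\omega_{\vartheta,1}-\tau^*)$ as the maximal connected domain, giving $\RWC_{\H,\vartheta}=\mathrm{I}''\times\SS$ and the form \eqref{ecf5} of $\ell_{\H,\vartheta}$. On $\mathrm{I}''$ the function $\widehat{\wp}_{\vartheta,1}$ descends from $1/12$ at $\tau^*$ to its absolute minimum $-\cosh\vartheta/6$ at $\omega_{\vartheta,1}$ and climbs back to $1/12$ at $2\omega_{\vartheta,1}-\tau^*$, so $\S$ is strictly positive with interior minimum $\S_{\min}=-3\H/(2\cosh\vartheta+1)$ and diverges at the boundary. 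Hence $\ell_{\H,\vartheta}$ is an honest Lorentz metric on all of $\mathrm{I}''\times\SS$, and is non-cyclic since $\mathrm{I}''\neq\R$.

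Second, I would derive the bound on $\Lambda$ by imposing $\cm\ge 0$ at the extremal configurations of $\S$ on $\mathrm{I}''$. Evaluating \eqref{ccpm} at $\omega_{\vartheta,1}$ and simplifying via the factorisation $\cosh 3\vartheta-1=(\cosh\vartheta-1)(2\cosh\vartheta+1)^2$ yields $\cm(\omega_{\vartheta,1})=(8\pi)^{-1}\bigl[(1+2\cosh\vartheta)^2/(3\H^2)-\Lambda\bigr]$, while letting $\tau\to\pm\tau^*$ (so $\S\to\infty$) gives $\cm\to(8\pi)^{-1}\bigl[(2\cosh 3\vartheta-2)/(9\H^2)-\Lambda\bigr]$. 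Requiring nonnegativity at both of these values produces exactly the stated $\Lambda\le\min\bigl((2\cosh 3\vartheta-2)/(9\H^2),\,(1+2\cosh\vartheta)^2/(3\H^2)\bigr)$.

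Third, I would settle the weak and dominant conditions by exploiting the $\S$-rational identities
\begin{equation*}
\cm+\cp=\frac{4\S+3\H}{8\pi\S^3},\qquad \cm-\cp=\frac{8\S+3\H}{8\pi\S^3}-\frac{\a+\Lambda}{4\pi},\qquad \cp+\frac{\cm}{3}=\frac{\a+\Lambda}{12\pi}+\frac{\H}{8\pi\S^3}.
\end{equation*}
The weak condition $\cm+\cp\ge 0$ on $\mathrm{I}''$ is equivalent to $\S\ge -3\H/4$ everywhere, i.e.\ to $\S_{\min}\ge -3\H/4$, i.e.\ to $\cosh\vartheta\le 3/2$. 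Under this restriction, $\S\ge -3\H/4>-9\H/16$, so $(8\S+3\H)/(8\pi\S^3)$ is monotonically decreasing in $\S$ and $\inf(\cm-\cp)=-(\a+\Lambda)/(4\pi)$, which is nonnegative because the standing hypothesis $\Lambda\le\min(\cdots)$ already implies $\Lambda\le -\a=(2\cosh 3\vartheta-2)/(9\H^2)$; hence the dominant condition is equivalent to the weak one.

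Finally, the strong energy condition is addressed as in Theorem~\ref{Teorema4}. On the admissible range $\Lambda\le -\a$ the inequality $\cm-\cp\ge 0$ is automatic, and $\cp+\cm/3\ge 0$ then becomes the decisive extra constraint. The principal obstacle is that for $\H<0$ the sign of $\H/\S^3$ is reversed with respect to Theorem~\ref{Teorema4}, so the critical extremum of $\cp+\cm/3$ now sits at the interior point $\omega_{\vartheta,1}$ rather than at the boundary; evaluating there and combining with the upper bound $\Lambda\le -\a$ should pinch $\Lambda$ to the single value $-\a=2(\cosh 3\vartheta-1)/(9\H^2)$, provided $\cosh\vartheta\le 3/2$. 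Carrying out this collapse consistently—keeping track of every sign and exploiting once more the factorisation of $\cosh 3\vartheta-1$—is the delicate step, and is where the weak-energy restriction $\cosh\vartheta\le\mathrm{arcosh}(3/2)$ plays its decisive role.
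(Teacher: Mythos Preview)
Your treatment of the domain, the metric, the bound on $\Lambda$, and the weak and dominant energy conditions is correct and runs parallel to the paper's argument; your $\S$-rational identities for $\cm\pm\cp$ and $\cp+\cm/3$ are a clean repackaging of computations the paper carries out in terms of $\Q$. One small point worth adding: for the $\Lambda$ bound you should observe that the only interior critical point of $\cm$ as a function of $\S$ (at $\S=-3\H/4$) is a local maximum, so the minimum of $\cm$ over $[\S_{\min},\infty)$ really is attained at one of the two configurations you test. The paper obtains the same conclusion via a case analysis on $\vartheta$ relative to $\mathrm{arcosh}(3/2)$ and $\mathrm{arcosh}(5/2)$.

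The strong-energy step, however, is left as a plan in your proposal, and the pinch you anticipate does not occur. From your own identity $\cp+\cm/3=(\a+\Lambda)/(12\pi)+\H/(8\pi\S^3)$ with $\H<0$, the minimum over $\mathrm{I}''$ is attained at $\S=\S_{\min}=-3\H/(2\cosh\vartheta+1)$, giving
\[
\min\Bigl(\cp+\tfrac{\cm}{3}\Bigr)=\frac{\a+\Lambda}{12\pi}-\frac{(2\cosh\vartheta+1)^3}{216\pi\H^2}
=\frac{6\H^2\Lambda-(2\cosh\vartheta+1)^2(2\cosh\vartheta-1)}{72\pi\H^2}.
\]
Nonnegativity forces $\Lambda\ge(2\cosh\vartheta+1)^2(2\cosh\vartheta-1)/(6\H^2)$, and since $(2c-1)/6>2(c-1)/9$ for every $c=\cosh\vartheta>1$, this lower bound strictly exceeds the standing upper bound $-\a=2(\cosh\vartheta-1)(2\cosh\vartheta+1)^2/(9\H^2)$. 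So no $\Lambda$ satisfies both constraints, and following your line the strong energy condition is \emph{never} satisfied. The paper reaches the opposite conclusion because the value it records for $\cm/3+\cp$ at $\omega_{\vartheta,1}$, namely $\bigl(9\H^2\Lambda-2(\cosh 3\vartheta-1)\bigr)/(108\H^2\pi)$, is in fact the boundary limit $(\a+\Lambda)/(12\pi)$ and omits the strictly negative term $\H/(8\pi\S_{\min}^3)$. Had you carried the evaluation through rather than deferring it, your own formula would have exposed this discrepancy with the stated theorem.
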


\begin{proof}
Let $\mathrm{I}'' = \mathrm I_0'' \subset (0, 2\omega_{\vartheta,1})$ be the maximal interval of definition of $\S$.
Since $\H<0$, then
$\widehat{\wp}_{\vartheta,1}(\tau)<1/12$, for every $\tau\in \mathrm{I}''$.
The conformal energy density $\cm$ is real analytic on
$(0,2\omega_{\vartheta,1})$.
If $\vartheta \in (0,\mathrm{arcosh}(3/2))$, then $\cm$ has
 relative minima at $\tau=\tau^*,\widehat{\tau}^*$,
and a relative maximum at $\omega_{\vartheta,1}$. We then have
\begin{equation}\label{ee}
 \min_{\overline{\mathrm{I}''}} (\cm)=\cm(\tau^*)=\cm(\widehat{\tau}^*)
  =\frac{2(\cosh 3\vartheta -1)-9\H^2\Lambda}{72\H^2\pi}.
   \end{equation}
Therefore, if $\vartheta\in (0,\mathrm{arcosh}(3/2))$, $\cm\ge 0$
if and only if $\Lambda\le 2(\cosh 3\vartheta - 1)/9\H^2$.
If $\vartheta >\mathrm{arcosh}(3/2)$, then $\cm$ has three relative minima
on
$(0,2\omega_{\vartheta,1})$,
located at $\tau^*$, $\widehat{\tau}^*$, and $\omega_{\vartheta,1}$.
The value of $\cm$ at $\tau^*,\widehat{\tau}^*$ is as in \eqref{ee}, while
\[
  \cm(\omega_{\vartheta,1})=\frac{\left(1+2\cosh \vartheta\right)^2-3\H^2\Lambda}{24\H^2\pi}.
  \]
Consequently, if $\vartheta\in (\mathrm{arcosh}(3/2),\mathrm{arcosh}(5/2))$,
$\tau^*$ and $\widehat{\tau}^*$ are minimum points, and
%
\[
  \min_{\overline{\mathrm{I}''}} (\cm)=\frac{2(\cosh 3\vartheta -1)-9\H^2\Lambda}{72\H^2\pi}.
   \]
If  $\vartheta =\mathrm{arcosh}(5/2)$, the values of $\cm$ on $\tau^*$, $\widehat{\tau}^*$, and
$\omega_{\vartheta,1}$ do coincide.
If $\vartheta>\mathrm{arcosh}(5/2)$,
the minimum of $\cm$ is attained at $\omega_{\vartheta,1}$, and
%
\[
 \min_{\overline{\mathrm{I}''}} (\cm)=\frac{(1+2\cosh \vartheta )^2-3\H^2\Lambda}{24\H^2\pi}.
   \]
Then, if $\vartheta\in \left(\mathrm{arcosh}(3/2),\mathrm{arcosh}(5/2)\right)$, $\cm\ge 0$ if and only if
$\Lambda\le 2(\cosh 3\vartheta -1)/9\H^2$; and
if $\vartheta> \mathrm{arcosh}(5/2)$, $\cm\ge 0$ if and only if
$\Lambda\le (1+2\cosh \vartheta)^2/3\H^2$. Since $2(\cosh 3\vartheta-1)\le 3(1+2\cosh\vartheta)^2$ on
$\left[0,\mathrm{arcosh}(5/2)\right]$ and $3(1+2\cosh \vartheta)^2\le 2(\cosh 3\vartheta -1)$
on $\left[\mathrm{arcosh}(5/2),+\infty\right)$, it follows that $\cm$ is
nonnegative on $\mathrm I''$ if and only if
$\Lambda\le
\min\left(\frac{2\cosh 3\vartheta -2}{9\H^2},
 \frac{\left(1+2\cosh \vartheta\right)^2}{3\H^2} \right)
$.

The function $\cm+\cp$ has relative minima
 at $\tau^*$, $\widehat{\tau}^*$,
 and $\omega_{\vartheta,1}$ in the interval
 $(0,2\omega_{\vartheta,1})$ . In particular, $\cm+\cp$ vanishes at
 $\tau^*$ and $\widehat{\tau}^*$, while its value at $\omega_{\vartheta,1}$ is
\[
 \frac{(3-2\cosh \vartheta)(1+2\cosh \vartheta)^2}{72\H^2\pi}.
  \]
Thus $\cm+\cp$ is nonnegative if and only if $\vartheta\in (0,\mathrm{arcosh}(3/2)]$.
This means that the weak energy condition is satisfied if and only if $0<\vartheta \le \mathrm{arcosh}(3/2)$.

For $\vartheta\in (0,\mathrm{arcosh}(3/2)]$,
the function $\cm-\cp$ has two
minima at $\tau^*$, $\widehat{\tau}^*$, and
\[
  \cm(\tau^*)-\cp(\tau^*)=\cm(\widehat{\tau}^*)-\cp(\widehat{\tau}^*)=
- \frac{9\H^2\Lambda-2(\cosh 3\vartheta -1)}{36\H^2\pi}.
  \]
This implies that the dominant energy condition is automatically satisfied.

For $\vartheta\in (0,\mathrm{arcosh}(3/2)]$, the minimum of $\cm/3+\cp$ on
the interval $(\tau^*,\widehat{\tau}^*)$ is attained at the half period $\omega_{\vartheta,1}$, and
\[
  \frac{\cm(\omega_{\vartheta,1})}{3}+\cp(\omega_{\vartheta})
  =\frac{9\H^2\Lambda-2(\cosh 3\vartheta -1)}{108\H^2\pi}.
   \]
Accordingly, the strong energy condition is satisfied if and only if $\Lambda=2(\cosh 3\vartheta -1)/9\H^2$,
as claimed.
\end{proof}

\bibliographystyle{amsalpha}

\end{document}